\documentclass[11pt,reqno]{amsart}
\usepackage[T1]{fontenc}
\usepackage{lmodern}
\usepackage[margin=1.03in]{geometry}
\usepackage{amsmath,amssymb,amsthm,mathtools,booktabs,tabularx,array}
\usepackage[expansion=false]{microtype}
\usepackage[hidelinks]{hyperref}
\hypersetup{pdftitle={Rank-two Nahm sums, modularity obstructions, and the McKay--Thompson series T31A},pdfauthor={Cetin Hakimoglu-Brown}}
\newcommand{\QQ}{\mathbb Q}
\newcommand{\ZZ}{\mathbb Z}
\newcommand{\RR}{\mathbb R}
\newcommand{\HH}{\mathbb H}
\newcommand{\EE}{\mathbb E}
\newcommand{\Li}{\operatorname{Li}}
\newcommand{\diag}{\operatorname{diag}}
\newcommand{\Res}{\operatorname{Res}}
\newcommand{\RG}{\mathcal G}
\newcommand{\RH}{\mathcal H}
\newcommand{\Ssum}{\mathcal S_{31}}
\newtheorem{theorem}{Theorem}[section]
\newtheorem{proposition}[theorem]{Proposition}
\newtheorem{lemma}[theorem]{Lemma}
\newtheorem{corollary}[theorem]{Corollary}
\newtheorem{conjecture}[theorem]{Conjecture}
\theoremstyle{remark}

\numberwithin{equation}{section}
\title[Rank-two Nahm sums and the series $T_{31A}$]
{Rank-two Nahm sums, modularity obstructions,\break and the McKay--Thompson series $T_{31A}$}
\author{Cetin Hakimoglu-Brown}
\email{mathemails@proton.me}
\date{20 September 2026}
\subjclass[2020]{Primary 11F03, 33D15; Secondary 11E25, 11R16, 41A60}
\keywords{Nahm sums, Rogers--Ramanujan functions, modularity obstruction,
McKay--Thompson series, Fricke involution, dilogarithm}
\begin{document}
\begin{abstract}
We conjecture a positive four-shift identity for rank-two Nahm sums
with denominator steps $(1,31)$, giving a mixed-base realization of
the Monster McKay--Thompson series $T_{31A}$ through the classical
Rogers--Ramanujan function $U(1,31)$. A theta dissection directly
derives the modular target. For the underlying quadratic exponent
$3r^2+31rs+93s^2$, we prove nonmodularity of every individual sum
with rational linear terms and normalization, together with an
exclusion from finite Fricke systems with meromorphic finite-width
expansions at infinity. Exact agreement through $q^{2000}$ and six
vanishing radial corrections support the proposed combination;
a five-term dilogarithm certificate and an exact amplitude calculation
prove its leading asymptotic behavior. We further propose a
three-component Fricke law for a dual combination, with an explicit
icosahedral matrix involving fifth-root constants, and a quadratic
identity linking its components to $T_{31A}$. The dual translation
law and matrix algebra are proved; its Fricke transformation and
moonshine identity remain conjectural, supported respectively by
numerical evaluations and exact finite coefficient checks.
\end{abstract}
\maketitle

\section{Introduction}
An individual Nahm sum can fail the necessary asymptotic conditions for
modularity even when a finite combination of its shifts is modular.
This distinction is already present in the literature
\cite{AVH,Storzer}. It becomes decisive for the last positive integral
quadratic datum in the restricted cubic sieve of \cite{Brown}.
That sieve leaves denominator indices $3$, $13$, and $31$ within a
specified ten-record monomial family. The index-$3$ sums give the
Kanade--Russell identities modulo nine; the index-$13$ evaluations in
\cite{Brown} remain conjectural. Here we investigate the index-$31$
system beyond the individual-sum obstruction.

Put $q=e^{2\pi i\tau}$, where
$\tau\in\HH=\{z\in\mathbb C:\operatorname{Im}z>0\}$, and
\[
 (a;q)_n=\prod_{j=0}^{n-1}(1-aq^j),\qquad
 (a_1,\ldots,a_k;q)_\infty=\prod_{i=1}^k\prod_{j\geq0}(1-a_iq^j).
\]
Fractional powers mean $q^c=e^{2\pi i c\tau}$. Define
\begin{equation}\label{eq:family}
 F_{a,b}(q)=\sum_{r,s\geq0}
 \frac{q^{3r^2+31rs+93s^2+ar+bs}}
 {(q;q)_r(q^{31};q^{31})_s},\qquad a,b\in\QQ.
\end{equation}
The associated matrices are
\begin{equation}\label{eq:matrices}
 A=\begin{pmatrix}6&1\\31&6\end{pmatrix},\quad
 D=\diag(1,31),\quad
 B=AD=\begin{pmatrix}6&31\\31&186\end{pmatrix},\quad \det B=155.
\end{equation}
Positive definiteness gives local uniform convergence on $\HH$ for all
fixed rational $a,b$. Our first result covers this entire parameter set.

\begin{theorem}\label{thm:individual}
For every $a,b,c\in\QQ$, the function $q^cF_{a,b}(q)$ is not a
weight-zero modular function on any finite-index subgroup of
$\mathrm{SL}_2(\ZZ)$ with finite-order multiplier, meromorphic at the cusps.
\end{theorem}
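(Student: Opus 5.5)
The plan is to use the standard radial-asymptotic obstruction for Nahm sums. If $f(q)=q^cF_{a,b}(q)$ were a weight-zero modular function on a finite-index subgroup $\Gamma$ with finite-order multiplier and meromorphic at the cusps, then the behaviour of $f(e^{-\varepsilon})$ as $\varepsilon\to0^+$ would be governed by its expansion at the cusp $0$. Writing $\tilde q=e^{-4\pi^2/\varepsilon}$ and letting $N$ be the cusp width times the multiplier order, we would get
\[
 f(e^{-\varepsilon})=\sum_{n\ge n_0}\alpha_n\,\tilde q^{\,n/N}
 =\alpha_{n_0}\,e^{-4\pi^2 n_0/(N\varepsilon)}\bigl(1+O(e^{-\delta/\varepsilon})\bigr).
\]
This has three consequences:
\begin{itemize}
\item the exponential rate must be a rational multiple of $\pi^2$;
\item no power of $\varepsilon$ may appear;
\item after the exponential and the constant $\alpha_{n_0}$ are removed, the remaining factor is $1$ up to exponentially small error, so every coefficient of its formal power series in $\varepsilon$ must vanish.
\end{itemize}
It therefore suffices to compute the asymptotic expansion of $F_{a,b}(e^{-\varepsilon})$ explicitly and show that one of these conditions fails for every rational $a,b,c$.

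First I will carry out the saddle-point analysis for the sum \eqref{eq:family} with the matrices \eqref{eq:matrices}. Substitute $r=x/\varepsilon$ and $s=y/\varepsilon$, and use the Euler--Maclaurin/dilogarithm asymptotics of $(q;q)_r$ and $(q^{31};q^{31})_s$. The dominant term is $e^{V/\varepsilon}$. Here $V$ is a sum of values of the Rogers dilogarithm $L$ at the unique solution $(z_1,z_2)\in(0,1)^2$ of the Nahm equations for $A$ with the weights $D$, namely $1-z_i=\prod_j z_j^{A_{ij}}$, up to the normalization dictated by $D$. There is also an explicit amplitude,
\[
 \frac{z_1^{a}z_2^{b/31}}{\sqrt{\det(\text{Hessian})}}\cdot(\text{algebraic factor}),
\]
where the Hessian is built from $B$ and $\diag(z_i/(1-z_i))$, and the saddle is nondegenerate by positive definiteness. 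This gives a full asymptotic series
\[
 F_{a,b}(e^{-\varepsilon})\sim e^{V/\varepsilon}\,\Omega_{a,b}\,\bigl(1+c_1(a,b)\varepsilon+c_2(a,b)\varepsilon^2+\cdots\bigr),
\]
and multiplying by $q^c$ simply multiplies the bracket by $e^{-c\varepsilon}$. The coefficients $c_k$ are polynomials in $a,b$ with coefficients in the field $\QQ(z_1,z_2)$. I expect $V$ to be a rational multiple of $\pi^2$, since the later five-term certificate suggests this, so the leading-term test alone will not suffice. The argument must instead rest on the vanishing conditions: $c_1(a,b)-c=0$ together with the $\varepsilon^2$ and $\varepsilon^3$ coefficients of $e^{-c\varepsilon}(1+c_1\varepsilon+\cdots)$.

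The main step, and the main obstacle, is to show that this system has no rational solution $(a,b,c)$. My approach is as follows. The first condition fixes $c=c_1(a,b)$, and $c_1$ is linear or quadratic in $(a,b)$. Substituting this value into the second condition gives a polynomial identity $P_2(a,b)=0$ whose coefficients lie in the real field $K=\QQ(z_1,z_2)$. The key is that $z_1,z_2$ generate a cubic field; the paper's subject class lists cubic fields, and the cubic sieve of \cite{Brown} supports this. So I will expand $P_2$ in a $\QQ$-basis $1,\theta,\theta^2$ of $K$. Since $a$ and $b$ are rational, each of the three $\QQ$-components must vanish separately, which gives three rational polynomial equations in two unknowns. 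I expect these to be inconsistent by direct elimination, via a resultant, and in any case adding the $\varepsilon^3$ condition $P_3$ should make them inconsistent. A fallback exists if the irrational parts happen to cancel: I will check that the remaining rational solutions make the amplitude $\Omega_{a,b}$ incompatible. The required compatibility is that $\Omega_{a,b}$ equal the leading coefficient $\alpha_{n_0}$ of a modular function with algebraic Fourier coefficients; after normalization this is an algebraic number, whereas $\Omega_{a,b}$ involves $\sqrt{\det}$ factors and transcendental $z_i^{a}$ only in controlled ways.

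The computational burden is the exact symbolic evaluation of $c_1,c_2,c_3$ in the cubic field, which I would do by the Feynman-diagram (Gaussian moment) formula for the asymptotic series of Nahm sums. This computation is routine but heavy, and the exact rational elimination is where the proof can fail.
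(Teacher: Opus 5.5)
Your framework is the paper's. Along $q=e^{-\varepsilon}$ the cusp-$0$ expansion allows no $\varepsilon$-corrections, the correction coefficients lie in the cyclic cubic field $K=\QQ(\rho)$, and one argues through rational coordinates. The gap is the decisive step. You leave it as an expectation, and you set it up in a way that discards the constraint that makes it finite.

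You say the first condition ``fixes $c=c_1(a,b)$'' and then work with the second condition over all rational $(a,b)$. But $c\in\QQ$, so the first condition already says $\beta_1(a,b)\in\QQ$: the $\rho$- and $\rho^2$-coordinates of $\beta_1$ must vanish. These are two rational quadratics, $P(a,b)=Q(a,b)=0$. Their resultant is $\Res_b(P,Q)=-961(3a-19)\mathcal R(a)$ with $\mathcal R$ irreducible (already modulo $3$), which forces $a=19/3$ and then $b=124/3$. Only one exact evaluation of the second condition is then needed: $\beta_2-\tfrac12\beta_1^2=-1653155/3518667\neq0$ at that pair. This is how the paper argues, and it is the idea missing from your plan.

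Your substitute is not decisive as stated. Splitting $\beta_2-\tfrac12\beta_1^2$ into three rational coordinates gives equations that are \emph{not} inconsistent on the irrational side. At $(19/3,124/3)$ both irrational coordinates vanish, because $\beta_1=8806/3249$ and $\beta_2=33813353/10556001$ are rational there. So you would first have to find all common rational zeros of those two polynomial equations in $(a,b)$, ruling out common components, before the rational coordinate can finish. None of this is done, and bringing in the $\varepsilon^3$ condition only adds work.

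The amplitude fallback gives no obstruction at all. For rational $a,b$ the leading amplitude $\rho^a\sigma^{b/31}\bigl((1-\rho)(1-\sigma)\bigr)^{-1/2}(31\det\mathsf H)^{-1/2}$ is algebraic; the powers $z_i^a$ are not transcendental. An algebraic amplitude cannot contradict anything the hypotheses say about the nonzero leading cusp coefficient.

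As written, the proposal reduces the theorem to an elimination that is never carried out. Reading the first-order condition with $c\in\QQ$ is what makes the exclusion finite and checkable.
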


The proof in Section~\ref{sec:exclusion} uses the first two radial
corrections in the cyclic cubic saddle field of conductor $19$.
The first condition forces $(a,b)=(19/3,124/3)$; the second logarithmic
correction at this pair is the nonzero rational number
$-1653155/3518667$. This is an exact exclusion over rational parameters,
independent of a proposed product modulus or a bounded search.
Corollary~\ref{cor:fricke} strengthens this conclusion: for every $N>0$,
no finite-dimensional space preserved by $\tau\mapsto-1/(N\tau)$ and
meromorphic at infinity in finite-width Fourier parameters can contain
an individual $q^cF_{a,b}$. This statement does not assume a
finite-image modular representation.

For the combination problem, let
\begin{equation}\label{eq:RR}
 \RG(q)=\sum_{n\geq0}\frac{q^{n^2}}{(q;q)_n},\qquad
 \RH(q)=\sum_{n\geq0}\frac{q^{n^2+n}}{(q;q)_n},
\end{equation}
and use the classical notation \cite{BY}
\begin{equation}\label{eq:U31}
 U(1,31)=\RH(q)\RG(q^{31})-q^6\RG(q)\RH(q^{31}).
\end{equation}
Set
\begin{equation}\label{eq:signed}
 \Ssum(q)=F_{0,0}+q^2F_{3,31}-q^{22}F_{16,93}.
\end{equation}

\begin{conjecture}\label{conj:main}
For $|q|<1$,
\begin{equation}\label{eq:mainconjecture}
 \Ssum(q)=U(1,31).
\end{equation}
\end{conjecture}

Proposition~\ref{prop:positive} proves the positive reformulation
\begin{equation}\label{eq:positive}
 \Ssum(q)=F_{0,0}+q^2F_{5,31}+q^8F_{9,62}+q^9F_{11,62}.
\end{equation}
Thus the conjecture proposes a positive coupled expression for a known
function. Four constituents still use only two summation variables.
The classical definition \eqref{eq:U31} itself gives an uncoupled signed
double sum; the specific quadratic datum \eqref{eq:matrices} and its
positive combination are the point of the proposal.

The moonshine connection makes the target particularly concrete:
Section~\ref{sec:target} proves, independently of the conjecture, that
\begin{equation}\label{eq:moonshine-intro}
 T_{31A}(\tau)=q^{-1}U(1,31)^3.
\end{equation}
Consequently Conjecture~\ref{conj:main} is equivalent to
$q^{-1}\Ssum(q)^3=T_{31A}(\tau)$, with the cube-root branch fixed by
$\Ssum(q)=1+O(q^2)$. This proposes a coupled positive Nahm-sum
realization of a classical moonshine function. We give a direct
derivation through a discriminant-$-31$ theta difference and Fricke
symmetry. The target and its moonshine interpretation are classical
\cite{CN,OEIS31}; the proposed mixed-base identity is the new assertion.
The distinction between the proved target and its proposed Nahm-sum
realization is maintained throughout. The coefficient and saddle
calculations in Section~\ref{sec:evidence} are finite exact evidence,
not a proof of \eqref{eq:mainconjecture}.

Section~\ref{sec:dual-fricke} develops a second proposed moonshine
connection, using the dual quadratic datum $A^{-1}D$. An explicit
dual combination splits into three fractional exponent classes. We
conjecture that Fricke acts on these components by a constant matrix
involving $\cos(2\pi/5)$ and $\cos(4\pi/5)$, and that a quadratic
expression in the components equals $(T_{31A}+1)^2-4$.
The exact translation law and the icosahedral matrix relations are
proved independently of these conjectures. This additional structure
concerns the dual combination; it does not supply a proof of
Conjecture~\ref{conj:main}.

\subsection{Relation to earlier work}
Zagier's asymptotic classification in rank one illustrates how
successive radial coefficients constrain modularity; he also explains
the rapidly increasing difficulty in higher rank
\cite[Section~II.3.C]{Zagier}. Vlasenko--Zwegers develop the higher-rank
method \cite{VZ}, and Mizuno treats symmetrizable matrices \cite{Mizuno}.
The most direct mixed-base rank-two precedent is Wang--Wang
\cite{WW}: they prove Rogers--Ramanujan type evaluations for eight
further sets of Mizuno's candidates and a conjectural vector-valued
transformation law.
Here the arithmetic construction fixes the matrix first. The remaining
classification over all rational linear terms reduces to two polynomial
equations in the cubic saddle field, followed by one nonzero rational
correction. This is a complete exclusion for \eqref{eq:matrices}, not
a classification of all rank-two matrices.
The broader monomial construction, the ten-record source, and the
index-$13$ comparison are developed in \cite{Brown}; their proofs are
not repeated here. In particular, \cite[version~1, Section~5.3]{Brown}
leaves the unbounded index-$31$ individual-sum problem as a further
asymptotic test. Theorem~\ref{thm:individual} completes that test, while
Corollary~\ref{cor:fricke} states its consequence for Fricke closure.
Section~\ref{sec:arithmetic} adds a positive-support obstruction that
applies to any finite number of shifts. The new candidate and its exact
checks address the combination problem left open by the exclusion.

Known modular combinations of nonmodular sums include the Ising-model
identities of Andrews, van Ekeren and Heluani \cite{AVH} and the families
in Storzer's thesis \cite[Section~6.3]{Storzer}. We give an explicit
positive modulus-five reformulation below. These precedents show why
Theorem~\ref{thm:individual} does not settle the combination problem.
The classical family $U(r,s)$ and its relations with binary quadratic
forms are treated by Berkovich--Yesilyurt \cite{BY}. The theta form of
discriminant $-31$ used here is explicitly tabulated by Akbary--Totani
\cite{AT}. Accordingly, neither the target function nor the general
phenomenon of modular combinations is presented as new.

In the standard Andrews--Gordon representation, product modulus
$M=2k+1$ corresponds to $k-1$ summation variables \cite{Andrews}; see also
\cite[(1.2)]{Warnaar}.
This gives ranks $2,3,5,14$ at moduli $7,9,13,31$. Smaller representations
can concern different functions: the Kanade--Russell sums at modulus
nine have rank two, with denominator steps $(1,3)$, and their evaluations
are now proved \cite{MizunoKR,Xia}. Our rank-two conjecture at $31$
does not compress an Andrews--Gordon identity for the same function.
Rank counts summation variables; it does not specify the dimension of
a modular representation. In particular, the scalar Fricke law below
does not assert a two-component transformation under the full modular group.

\section{Known evaluations and obstructions in the cubic source}
\label{sec:arithmetic}
\subsection{Denominator index, field conductor, and product modulus}
These are distinct parameters. Write a symmetrizable quadratic datum as
$D=\diag(1,m)$, $B=AD=B^{\mathsf T}$, with exponent
$Q(r,s)=\tfrac12(r,s)B(r,s)^{\mathsf T}$.
The ten normalized records in \cite[Table~3]{Brown} come from
\[
 y=x^p(1-x)^h,\qquad 1-y=x^u(1-x)^v,\qquad m=uh-pv>0.
\]
When $h\ne0$, their conversion is
\begin{equation}\label{eq:conversion}
 A=\frac1h\begin{pmatrix}-p&1\\m&v\end{pmatrix},\qquad
 \det(AD)=-\frac{mu}{h}.
\end{equation}
Table~\ref{tab:source} records the outcomes needed here. It refers only
to that finite source and the displayed orientations. It is not a
classification of all two-term dilogarithms or all modular multisums.

\begin{table}[htbp]
\caption{The ten-record source of \cite{Brown}. Here $N$ is the saddle-field
conductor and $m$ the second denominator step. The last column distinguishes
matrix restrictions from evaluation status.}\label{tab:source}
\centering\small
\renewcommand{\arraystretch}{1.22}
\begin{tabularx}{\textwidth}{@{}rrrlX@{}}\toprule
Row&$N$&$m$&$Q(r,s)$&Outcome\\\midrule
1&7&2&$r^2/2+rs+s^2$&Known parity-component construction \cite{Mizuno}\\
2&7&2&---&No rational matrix in this orientation; reflected matrix singular\\
3&9&3&$r^2+3rs+3s^2$&Proved Kanade--Russell evaluations\\
4&7&7&$(r^2+7rs+7s^2)/3$&Indefinite; positive-support obstruction\\
5&13&13&$2r^2+13rs+26s^2$&Two-shift conjectures in \cite{Brown}\\
6&9&19&$(r^2+19rs+19s^2)/5$&Indefinite; positive-support obstruction\\
7&7&13&$(2r^2+13rs+26s^2)/3$&Positive definite; positive-support obstruction\\
8&19&31&$3r^2+31rs+93s^2$&Theorem~\ref{thm:individual}; Conjecture~\ref{conj:main}\\
9&7&2&$(r+2s)^2/2$&Singular matrix\\
10&7&97&$(4r^2+97rs+388s^2)/11$&Indefinite; positive-support obstruction\\\bottomrule
\end{tabularx}
\end{table}

Rows 4, 6 and 10 have determinant $-7/3$, $-57/5$ and $-291/11$,
respectively. Row 7 has nonintegral $B$; rows 3, 5 and 8 have positive
definite integral $B$. Thus the positive-coupling, positive-definite,
full-lattice integer-exponent requirements leave precisely $m=3,13,31$
for $m>2$, as proved in \cite[Theorem~5.1]{Brown}.
In particular, the excluded index $19$ is attached to conductor $9$;
the conductor-$19$ field supplies the surviving index $31$.

\subsection{An obstruction for any finite positive combination}
For a symmetric rational $d\times d$ matrix $B$, positive integers
$d_1,\ldots,d_d$, and $b\in\QQ^d$, put
\[
 \mathcal N_b(q)=\sum_{n\in\ZZ_{\geq0}^d}
 \frac{q^{\frac12n^{\mathsf T}Bn+b^{\mathsf T}n}}
 {\prod_{i=1}^d(q^{d_i};q^{d_i})_{n_i}}.
\]
Assume these define Puiseux series bounded below, with locally finite
coefficients. This holds, for example, when the quadratic form is
positive on the nonnegative cone away from zero.

\begin{proposition}\label{prop:support}
Suppose $\lambda_j>0$, $b_j\in\QQ^d$, and $c_j,\kappa\in\QQ$, and
\[
 \sum_{j=1}^t\lambda_jq^{c_j}\mathcal N_{b_j}(q)\in q^\kappa\RR((q)).
\]
Then every entry of $B$ is integral. More precisely, for each $j$,
\[
 c_j-\kappa\in\ZZ,\qquad b_{j,i}+B_{ii}/2\in\ZZ\quad(1\leq i\leq d).
\]
Conversely, these conditions together with integral $B$ imply the
displayed support containment whenever the series are defined as above.
\end{proposition}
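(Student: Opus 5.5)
The plan is to exploit positivity: no cancellation can occur in a positive combination, so the support of the left-hand side is the union of the supports of the individual terms. After that, integrality can be read off from a handful of small lattice points $n$.

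First I will record that each $\mathcal N_b$ has nonnegative coefficients. Each factor $1/(q^{d_i};q^{d_i})_{n_i}=\prod_{j=1}^{n_i}(1-q^{d_ij})^{-1}$ expands as a power series in $q$ with nonnegative integer coefficients and constant term $1$. Hence the summand indexed by $n$ is $q^{E_b(n)}(1+\sum_{k\geq1}a_kq^k)$ with $a_k\geq0$, where $E_b(n)=\tfrac12n^{\mathsf T}Bn+b^{\mathsf T}n$. The standing assumption (bounded below, locally finite coefficients) makes the sum over $n$ a well-defined Puiseux series, and each coefficient is a finite sum of nonnegative numbers. In particular the coefficient of $q^{E_b(n)}$ in $\mathcal N_b$ is at least $1$, since the $n$-th summand contributes $1$ there and everything else contributes $\geq0$.

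Next, multiplying by $\lambda_j>0$ and summing, the coefficient of $q^{c_j+E_{b_j}(n)}$ in $\sum_j\lambda_jq^{c_j}\mathcal N_{b_j}$ is at least $\lambda_j>0$. By hypothesis this exponent lies in $\kappa+\ZZ$, so
\[
 c_j-\kappa+E_{b_j}(n)\in\ZZ\qquad\text{for all } n\in\ZZ_{\geq0}^d .
\]
I will then specialize $n$ to a few small lattice points.
\begin{itemize}
\item Taking $n=0$ gives $c_j-\kappa\in\ZZ$, so $E_{b_j}(n)\in\ZZ$ for all $n\in\ZZ_{\geq0}^d$.
\item Taking $n=e_i$ gives $b_{j,i}+B_{ii}/2\in\ZZ$.
\item Taking $n=2e_i$ gives $2B_{ii}+2b_{j,i}\in\ZZ$. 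Subtracting twice the previous relation yields $B_{ii}\in\ZZ$.
\item Taking $n=e_i+e_k$ with $i\neq k$ gives $B_{ii}/2+B_{kk}/2+B_{ik}+b_{j,i}+b_{j,k}\in\ZZ$. Subtracting the two $e_i$, $e_k$ relations leaves $B_{ik}\in\ZZ$.
\end{itemize}
So $B$ is integral and the stated congruences hold; only one index $j$ is needed for the integrality of $B$.

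For the converse I will write
\[
 E_b(n)=\sum_i n_i\Bigl(b_i+\tfrac{B_{ii}}2\Bigr)+\sum_i\binom{n_i}{2}B_{ii}+\sum_{i<k}B_{ik}n_in_k .
\]
With integral $B$ and $b_i+B_{ii}/2\in\ZZ$, this is an integer for every $n\in\ZZ_{\geq0}^d$. Every exponent occurring in $q^{c_j}\mathcal N_{b_j}$ is therefore $c_j+E_{b_j}(n)+(\text{nonnegative integer})\in\kappa+\ZZ$. The sum is bounded below and has locally finite coefficients, so it lies in $q^\kappa\RR((q))$.

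The only delicate point is the no-cancellation step. It needs both positivity of the $\lambda_j$ and nonnegativity of all coefficients of the $\mathcal N_b$, together with the local finiteness hypothesis that makes coefficient extraction legitimate. Once the lower bound $\geq\lambda_j$ on the coefficient of $q^{c_j+E_{b_j}(n)}$ is established, the rest is elementary bookkeeping.
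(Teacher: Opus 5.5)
Your proposal is correct and follows essentially the same route as the paper: nonnegativity of the reciprocal Pochhammer expansions rules out cancellation, so every exponent $c_j+E_{b_j}(n)$ lies in $\kappa+\ZZ$. Your evaluations at $n=0,e_i,2e_i,e_i+e_k$ are exactly the paper's first, second and mixed differences, and your converse uses the same binomial rewriting of the exponent.
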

\begin{proof}
Each reciprocal denominator has nonnegative coefficients and constant
term one. Therefore, for every $j,n$, the exponent
\[
 e_j(n)=\tfrac12n^{\mathsf T}Bn+b_j^{\mathsf T}n+c_j
\]
occurs with positive coefficient in the combination. No such term can
cancel, so $e_j(n)-\kappa$ is an integer. At $n=0,e_i,2e_i$, first and
second differences give $c_j-\kappa$, $b_{j,i}+B_{ii}/2$, and $B_{ii}$
integral. A mixed difference at $0,e_i,e_k,e_i+e_k$ gives $B_{ik}$
integral. Conversely,
\[
 e_j(n)-\kappa=c_j-\kappa+
 \sum_i\left(B_{ii}\binom{n_i}{2}+(b_{j,i}+B_{ii}/2)n_i\right)
 +\sum_{i<k}B_{ik}n_in_k
\]
is then integral, and the denominator expansions only add integers.
\end{proof}

\begin{corollary}\label{cor:source-support}
For each of rows 4, 6, 7 and 10 of Table~\ref{tab:source}, no nonempty
finite positive combination of rational linear shifts equals
$q^\kappa$ times an integer-power Laurent series, for any rational
$\kappa$, with the stated denominator steps and unrestricted lattice.
\end{corollary}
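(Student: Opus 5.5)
The plan is to apply Proposition~\ref{prop:support} with $d=2$, denominator steps $(d_1,d_2)=(1,m)$, and the matrix $B$ read off from the quadratic form $Q(r,s)=\tfrac12(r,s)B(r,s)^{\mathsf T}$ in each row. The integrality conclusion of that proposition does not depend on the linear shifts $b_j$, the prefactors $c_j$, or the weights $\lambda_j>0$. It therefore suffices to exhibit one nonintegral entry of $B$ for each of the four rows.

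\textbf{Step 1: check the standing hypothesis.} Proposition~\ref{prop:support} requires each $\mathcal N_{b_j}$ to be a Puiseux series bounded below with locally finite coefficients. Rows 4, 6 and 10 are indefinite, so this is not automatic. However, in all four rows every coefficient of $Q$ is positive. Hence on the nonnegative cone
\[
 Q(n)\ge \alpha\,(n_1^2+n_2^2),
\]
where $\alpha>0$ is the smaller diagonal coefficient. For a fixed rational shift $b$, the exponent $Q(n)+b^{\mathsf T}n$ is then bounded below and tends to infinity. So only finitely many $n$ contribute below any given exponent. Since each $1/(q^{d_i};q^{d_i})_{n_i}$ is a power series with constant term $1$, the series $\mathcal N_b$ is well defined with exponents in a fixed lattice $\tfrac1L\ZZ$ shifted. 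This is the ``positive on the nonnegative cone away from zero'' case already mentioned before the proposition.

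\textbf{Step 2: compute the diagonal entries.} From the table, $B_{11}=2\times(\text{coefficient of }r^2)$, which gives
\[
 \text{row }4:\ \tfrac23,\qquad \text{row }6:\ \tfrac25,\qquad \text{row }7:\ \tfrac43,\qquad \text{row }10:\ \tfrac8{11}.
\]
None of these is an integer. (The off-diagonal entries $7/3$, $19/10$, $13/6$, $97/22$ are also nonintegral, but one entry suffices.)

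\textbf{Step 3: conclude.} Suppose a nonempty finite positive combination $\sum_j\lambda_jq^{c_j}\mathcal N_{b_j}$ lay in $q^\kappa\RR((q))$. Proposition~\ref{prop:support} would then force every entry of $B$ to be integral, contradicting Step 2. Here the key point in that proof is that the positive coefficients at the exponents $e_j(0),e_j(e_1),e_j(2e_1)$ cannot cancel, so their second difference $B_{11}$ must be an integer.

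The only point needing care is Step 1: confirming that the indefinite rows still give well-defined series to which the proposition applies. The positivity of all coefficients of $Q$ on the nonnegative cone settles this.
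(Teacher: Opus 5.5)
Your argument is correct and is the paper's own: read off $B_{11}=2/3,\,2/5,\,4/3,\,8/11$ and apply Proposition~\ref{prop:support}. Your Step~1 spells out the convergence that the paper only notes after the corollary (all coefficients of $Q$ are positive on the nonnegative cone).

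One harmless slip: $B_{12}$ is the full $rs$-coefficient of $Q$, so the off-diagonal entries in rows 6, 7 and 10 are $19/5$, $13/3$ and $97/11$, not half these. This does not affect the proof, since only $B_{11}$ is used.
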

\begin{proof}
The entries $B_{11}$ in these four rows are $2/3$, $2/5$, $4/3$, and
$8/11$. Apply Proposition~\ref{prop:support}.
\end{proof}
This applies to any number of positive shifts. Indefiniteness alone is
only an exclusion from the positive-definite saddle framework: the
positive-cross-term sums in these rows still converge. The support
obstruction is a separate assertion about integer-spaced expansions.
Neither assertion rules out all modular functions with fractional cusp
expansions. Signed cancellation, a restricted lattice, and rescaling
$q$ require separate analysis. The positive-sum integrality obstruction
is stronger than testing each linear term separately, but is not a
general nonmodularity theorem for indefinite sums.

\subsection{Explicit low-rank precedents}
For modulus five, define
\[
 \mathcal F^{(5)}_{a,b}(q)=\sum_{r,s\geq0}
 \frac{q^{4r^2+5rs+2s^2+ar+bs}}{(q;q)_r(q;q)_s}.
\]
Storzer's identity \cite[(6.23)]{Storzer} is
$\mathcal F^{(5)}_{-1,-1}-\mathcal F^{(5)}_{-1,0}+\mathcal F^{(5)}_{0,0}=\RG(q)$.
Canceling $1-q^s$ and shifting $s$ gives
$\mathcal F^{(5)}_{-1,-1}-\mathcal F^{(5)}_{-1,0}=q\mathcal F^{(5)}_{4,3}$. Hence its positive form is
\begin{equation}\label{eq:mod5positive}
 \sum_{r,s\geq0}\frac{q^{4r^2+5rs+2s^2}(1+q^{4r+3s+1})}
 {(q;q)_r(q;q)_s}=\frac1{(q,q^4;q^5)_\infty}.
\end{equation}
Storzer discusses the failure of individual modularity for this matrix
and proves a parametric summation behind the combination
\cite[Theorem~6.3.1]{Storzer}. A different rank-two example, with
matrix $\left(\begin{smallmatrix}8&3\\3&2\end{smallmatrix}\right)$,
occurs in the Ising-character formulas of \cite{AVH}.

The modulus-seven Andrews--Gordon case \cite{Andrews,Warnaar} includes
\begin{equation}\label{eq:AGseven}
 \sum_{r,s\geq0}\frac{q^{r^2+2rs+2s^2}}{(q;q)_r(q;q)_s}
 =\frac1{(q,q^2,q^5,q^6;q^7)_\infty}.
\end{equation}
This has steps $(1,1)$ and differs from both index-$7$ row 4 and the
index-$2$ parity construction in row 1.
The nearest mixed-base comparison is
\begin{equation}\label{eq:KRnine}
 \sum_{r,s\geq0}\frac{q^{r^2+3rs+3s^2}}{(q;q)_r(q^3;q^3)_s}
 =\frac1{(q,q^3,q^6,q^8;q^9)_\infty}.
\end{equation}
Mizuno proves the three symmetric modulo-nine evaluations
\cite{MizunoKR}; Xia proves all five \cite{Xia}. Thus product modulus
nine and denominator index three coexist in this example.

\subsection{The surviving index-thirteen comparison}
To avoid confusing its shifts with \eqref{eq:family}, write
\[
 F^{(13)}_{a,b}=\sum_{r,s\geq0}
 \frac{q^{2r^2+13rs+26s^2+ar+bs}}{(q;q)_r(q^{13};q^{13})_s}.
\]
The conjectures of \cite[Conjecture~6.1]{Brown} are
\begin{align}
 F^{(13)}_{0,0}+qF^{(13)}_{2,13}
 &\stackrel?=\frac1{(q,q^5,q^8,q^{12};q^{13})_\infty},\notag\\
 F^{(13)}_{0,0}+q^6F^{(13)}_{7,26}
 &\stackrel?=\frac1{(q^2,q^3,q^{10},q^{11};q^{13})_\infty},\label{eq:thirteen}\\
 F^{(13)}_{4,13}+q^4F^{(13)}_{6,26}
 &\stackrel?=\frac1{(q^4,q^6,q^7,q^9;q^{13})_\infty}.\notag
\end{align}
The two-dimensional Fricke law of their product sides is proved in
\cite{Fricke} and recalled in \cite{Brown}. The sum evaluations are
open. At $31$ the earlier density is $8/31$, so a reciprocal product
at modulus $31$ would need eight supported residues. Since $8$ does not
divide $30$, it cannot be a multiplicative subgroup coset. The theta
target in the next section avoids this restricted product ansatz.

\section{From the theta target to the McKay--Thompson series \texorpdfstring{$T_{31A}$}{T31A}}
\label{sec:target}
Write $J_d=(q^d;q^d)_\infty$ and
\begin{equation}\label{eq:theta-target}
 \Theta_{a,b,c}(\tau)=\sum_{u,v\in\ZZ}q^{au^2+buv+cv^2},\quad
 f_{31}=\frac{\Theta_{1,1,8}-\Theta_{2,1,4}}2,\quad
 \Phi=\frac{f_{31}}{\eta(\tau)\eta(31\tau)}.
\end{equation}
The two forms have discriminant $-31$; their three reduced classes
are $(1,1,8)$ and $(2,\pm1,4)$, and the last two theta series agree.
The function $f_{31}$ is the classical weight-one cusp form on
$\Gamma_0(31)$ with character $\chi_{-31}=(\frac{-31}{\cdot})$.
It is explicitly listed in \cite[Theorem~1.1 and the class-number-three
table]{AT}. We first identify its Euler quotient with \eqref{eq:U31}.

\begin{proposition}\label{prop:thetaU}
For $|q|<1$,
\begin{equation}\label{eq:thetaU}
 f_{31}(\tau)=qJ_1J_{31}U(1,31),\qquad
 \Phi(\tau)=q^{-1/3}U(1,31).
\end{equation}
\end{proposition}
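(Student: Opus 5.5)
The plan is to express both sides as theta series in the Jacobi-product form and compare them. Start from the Rogers--Ramanujan products
\[
 \RG(q)=\frac{1}{(q,q^4;q^5)_\infty},\qquad \RH(q)=\frac{1}{(q^2,q^3;q^5)_\infty}.
\]
Then
\[
 J_1\RG(q)=(q^2,q^3,q^5;q^5)_\infty, \qquad J_1\RH(q)=(q,q^4,q^5;q^5)_\infty .
\]
By the Jacobi triple product these are the one-dimensional theta series
\[
 J_1\RG(q)=\sum_{n\in\ZZ}(-1)^nq^{(5n^2+n)/2},\qquad
 J_1\RH(q)=\sum_{n\in\ZZ}(-1)^nq^{(5n^2+3n)/2}.
\]
Substituting into \eqref{eq:U31} gives
\[
 qJ_1J_{31}U(1,31)
 =\sum_{m,n}(-1)^{m+n}\Big(q^{1+\frac{5m^2+3m}{2}+31\frac{5n^2+n}{2}}
 -q^{7+\frac{5m^2+m}{2}+31\frac{5n^2+3n}{2}}\Big).
\]
This is a signed double theta series attached to the form $5m^2+155n^2$, whose discriminant is $-3100=-31\cdot 10^2$. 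Completing squares, the exponents become $\big((10m+3)^2+31(10n+1)^2\big)/40$ and $\big((10m+1)^2+31(10n+3)^2\big)/40$, with the constants $1$ and $7$ exactly absorbing the offsets $-9/40-31/40$ and $-1/40-279/40$.

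The key step is to show that this is the character-twisted sum
\[
 \frac{1}{2}\sum_{x^2+31y^2=40N}\epsilon(x,y)\,q^{N},
\]
with $x,y$ running over residues $\pm1,\pm3 \pmod{10}$ with the appropriate pairing, and then to rewrite it in terms of the ring of integers of $\QQ(\sqrt{-31})$. Concretely, set $\alpha=(x+y\sqrt{-31})/2$, of norm $10N$, and factor out the ideals above $2$ and $5$. Both primes split: $-31\equiv1\pmod 8$ and $-31\equiv 4\pmod 5$. The sign $\epsilon$, built from $(-1)^{m+n}$ and the choice of the term, should become a character on the ideals dividing $10$. After dividing by a fixed element of norm $10$ (or its class representative), the sum should collapse to $\sum_{\mathfrak a}\psi(\mathfrak a)q^{N(\mathfrak a)}$, where $\psi$ is the order-three class group character weighted so that it equals $1$ on the principal class and $-\tfrac12$ on the two nonprincipal classes. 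This is precisely $f_{31}=\tfrac12(\Theta_{1,1,8}-\Theta_{2,1,4})$, since $\Theta_{1,1,8}-\Theta_{2,1,4}=\sum(r_{\text{prin}}-r_{\text{nonprin}})q^N$.

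I expect this bookkeeping to be the main obstacle. It requires choosing the correct index-$10$ sublattice map, verifying that the signs match the cubic character, and checking that the map is bijective onto the right residue classes.

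As a fallback that avoids the explicit bijection, both $f_{31}$ and $qJ_1J_{31}U(1,31)$ can be shown to be weight-one forms on $\Gamma_0(31)$ with character $\chi_{-31}$. For $f_{31}$ this is standard (Hecke/Schoeneberg); for the product side, use the known modularity of $\eta\cdot$Rogers--Ramanujan theta quotients, or the double theta series above via Shimura's theta transformation for the lattice with the congruence conditions. The relevant space has small dimension, so a Sturm bound of $\tfrac{1}{12}\cdot 32$, i.e. a few coefficients, suffices. The expansion $q-q^2-q^5+\cdots$ on both sides can then be compared directly. Because the fallback needs a level-$31$ statement for the product side, which is not immediate from the level-$50\cdot31$ theta description, I would primarily pursue the direct lattice identification and use the Sturm check only as confirmation.

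The second identity is formal. It follows from $\eta(\tau)\eta(31\tau)=q^{1/24+31/24}J_1J_{31}=q^{4/3}J_1J_{31}$, so
\[
 \Phi=\frac{qJ_1J_{31}U(1,31)}{q^{4/3}J_1J_{31}}=q^{-1/3}U(1,31).
\]
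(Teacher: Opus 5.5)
Your reduction of $qJ_1J_{31}U(1,31)$ to the signed double sum with exponents $((10m+3)^2+31(10n+1)^2)/40$ and $((10m+1)^2+31(10n+3)^2)/40$ is correct. It coincides with the intermediate expression in the paper, and your eta computation for $\Phi$ is the paper's. The gap is the step that carries all the content: identifying that signed sum with $\tfrac12(\Theta_{1,1,8}-\Theta_{2,1,4})$. You only describe it (``should become'', ``should collapse''). As a reindexing of terms by ideals weighted by your $\psi$, it cannot work.

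With $\alpha=(x+y\sqrt{-31})/2$ and $y$ odd, $\alpha$ lies in exactly one of the primes $\mathfrak P,\bar{\mathfrak P}$ above $2$. So the cofactor ideal is never divisible by $(2)$. The small coefficients show the mismatch directly:
\begin{itemize}
\item At $q^2$ the double sum has the single term $(x,y)=(-7,1)$, with sign $-1$. There are two ideals of norm $2$, each of weight $-\tfrac12$.
\item At $q^4$ the equation $x^2+31y^2=160$ has no solution with $y$ odd. The ideals of norm $4$ are $(2)$, $\mathfrak P^2$, $\bar{\mathfrak P}^2$, with weights $1,-\tfrac12,-\tfrac12$.
\item At $q^5$ there is again one term against two nonprincipal ideals of norm $5$.
\end{itemize}
So the coefficients agree only after cancellation across ideal classes at the primes dividing $10$. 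Dividing by a fixed element of norm $10$ does not produce integral ideals in general, because $(\alpha)$ is divisible sometimes by a principal and sometimes by a nonprincipal ideal of norm $10$. No argument for the cancellation that actually occurs is supplied.

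Your fallback has the gap you yourself flag. Placing the product side in $M_1(\Gamma_0(31),\chi_{-31})$ is essentially the proposition. A Sturm bound at the natural level of the shifted theta series would need an explicit level and a much longer coefficient check than a few terms.

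The paper avoids class-field bookkeeping by undoing the completion of squares into one lattice sum,
$R(q)=\sum_{u\ \mathrm{odd},\,v\in\ZZ}(-1)^vq^{u^2+3uv/2+5v^2/2}$. It then computes $R$ in two ways.
\begin{itemize}
\item Since $u^2+\tfrac32uv+\tfrac52v^2=((10v+3u)^2+31u^2)/40$, sorting $u$ modulo $10$ shows that $R$ is twice your double sum. The class $u\equiv5$ cancels under $x\mapsto-x$.
\item Splitting $R$ instead by the parity of $v$, the substitutions $v=2w$, $x=u+w$ and $u=2z-v$ turn the two halves into $\Theta_{1,1,8}$ and $-\Theta_{2,1,4}$, each restricted by a parity condition. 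In both cases the complementary parity piece is $\Theta_{2,1,4}(2\tau)$, so these cancel and $R=\Theta_{1,1,8}-\Theta_{2,1,4}$.
\end{itemize}
That parity cancellation is exactly what handles the primes above $2$, where your ideal correspondence breaks down.
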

\begin{proof}
Consider the absolutely convergent signed theta sum
\[
 R(q)=\sum_{\substack{u\in\ZZ\text{ odd}\\v\in\ZZ}}
 (-1)^v q^{u^2+3uv/2+5v^2/2}.
\]
For even $v=2w$, put $x=u+w$. The exponent becomes
$x^2+xw+8w^2$, with $x-w$ odd. The omitted terms with $x-w$ even
have $x=w+2z$ and exponent
$2\{2(z+w)^2-(z+w)w+4w^2\}$.
Their series is $\Theta_{2,1,4}(2\tau)$.
For odd $v$, put $u=2z-v$. The exponent becomes
$2v^2-vz+4z^2$, with first coordinate $v$ odd. Its omitted even-first-
coordinate series is again $\Theta_{2,1,4}(2\tau)$, since
with $v=2y$,
\[
 2(2y)^2-(2y)z+4z^2=2(2z^2-zy+4y^2).
\]
The form on the right is $(2,-1,4)$, whose theta series equals
$\Theta_{2,1,4}$.
Subtracting the two parity parts gives
\begin{equation}\label{eq:Rtheta}
 R(q)=\Theta_{1,1,8}(\tau)-\Theta_{2,1,4}(\tau).
\end{equation}

For a second dissection, complete the square:
\[
 u^2+\frac32uv+\frac52v^2
 =\frac{(10v+3u)^2+31u^2}{40}.
\]
With $x=10v+3u$ and $y=u$, the new variables satisfy
$y$ odd, $x\equiv3y\pmod {10}$, with sign $(-1)^{(x-3y)/10}$.
For $y\equiv\pm1\pmod {10}$, the two classes give twice
\[
 \sum_{r,s\in\ZZ}(-1)^{r+s}
 q^{((10r-3)^2+31(10s-1)^2)/40}.
\]
For $y\equiv\pm3\pmod {10}$, they give minus twice the same expression
with $10r-3,10s-1$ replaced by $10r-1,10s-3$.
For example, in the class $y=10s-3$, take $x=-(10r-1)$; then
$(x-3y)/10=-r-3s+1$, which supplies the minus sign.
The remaining class $y\equiv5\pmod {10}$ cancels under $x\mapsto-x$:
the exponent is unchanged and the sign reverses.
Define $\vartheta_j(q)=\sum_{n\in\ZZ}(-1)^nq^{(5n^2-jn)/2}$ for $j=1,3$.
Expansion of the two squares therefore gives
\[
 R(q)=2q\{\vartheta_3(q)\vartheta_1(q^{31})-q^6\vartheta_1(q)\vartheta_3(q^{31})\}.
\]
The Jacobi triple product and the Rogers--Ramanujan identities give
$\vartheta_1(q)=J_1\RG(q)$ and $\vartheta_3(q)=J_1\RH(q)$.
Combine this with \eqref{eq:Rtheta}. Finally,
$\eta(\tau)\eta(31\tau)=q^{4/3}J_1J_{31}$ proves the second formula.
\end{proof}

This calculation is a specialization of the classical theta-dissection
setting of \cite{BY}; we include it to fix the normalization and to
make the precise target explicit. The result converts
Conjecture~\ref{conj:main} into the equivalent theta identity
\begin{equation}\label{eq:theta-conjecture}
 qJ_1J_{31}\Ssum(q)\stackrel?=f_{31}(\tau).
\end{equation}

\begin{proposition}\label{prop:Fricke}
Independently of Conjecture~\ref{conj:main},
\begin{equation}\label{eq:Fricke}
 \Phi\left(-\frac1{31\tau}\right)=\Phi(\tau).
\end{equation}
The cube $\Phi^3$ is a modular function on $\Gamma_0(31)$ and the
normalized Hauptmodul on $\Gamma_0(31)^+$ with expansion
\begin{equation}\label{eq:Haupt}
 \Phi^3=q^{-1}+3q+3q^2+6q^3+9q^4+13q^5+O(q^6).
\end{equation}
\end{proposition}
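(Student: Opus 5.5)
We will work with the definition $\Phi=f_{31}/(\eta(\tau)\eta(31\tau))$ from \eqref{eq:theta-target}, since Proposition~\ref{prop:thetaU} lets us pass freely between it and $q^{-1/3}U(1,31)$.

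\textbf{Step 1: Fricke action on the numerator.} The form $f_{31}$ is a weight-one newform on $\Gamma_0(31)$ with character $\chi_{-31}$. More concretely, each theta series $\Theta_{a,b,c}$ attached to a form of discriminant $-31$ satisfies
\[
\Theta_{a,b,c}\left(-\tfrac1{31\tau}\right)=-i\sqrt{31}\,\tau\,\Theta_{a,b,c}(\tau).
\]
We will prove this by Poisson summation on the lattice $\ZZ^2$ with Gram matrix $M=\left(\begin{smallmatrix}2a&b\\b&2c\end{smallmatrix}\right)$, where $\det M=31$. The dual lattice, scaled by $31$, is the lattice of the adjugate form $(c,-b,a)$, and this form is equivalent to $(a,b,c)$. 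Taking the difference of the two theta series gives
\[
f_{31}(-1/(31\tau))=-i\sqrt{31}\,\tau\,f_{31}(\tau).
\]
This says the Atkin--Lehner pseudo-eigenvalue is $+1$ in the normalization $(\sqrt{31}\tau)^{-1}$. The sign needs care, and we will fix it by evaluating both sides at the fixed point $\tau=i/\sqrt{31}$, where $f_{31}$ and $\Phi$ are nonzero. Numerically, the positivity of the $q$-expansion at real $q$ gives this nonvanishing.

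\textbf{Step 2: Fricke action on the denominator.} From $\eta(-1/\tau)=\sqrt{-i\tau}\,\eta(\tau)$ we get
\[
\eta(-1/(31\tau))=\sqrt{-31i\tau}\,\eta(31\tau),\qquad \eta(31\cdot(-1/(31\tau)))=\eta(-1/\tau)=\sqrt{-i\tau}\,\eta(\tau).
\]
Hence
\[
\eta\eta_{31}\!\left(-\tfrac1{31\tau}\right)=\sqrt{31}\,(-i\tau)\,\eta(\tau)\eta(31\tau),
\]
and the factor $-i\sqrt{31}\,\tau$ cancels with Step 1 to give \eqref{eq:Fricke}. We expect Step 1 to be the main obstacle: the branch and sign bookkeeping must be done carefully, and the adjugate-equivalence argument must be checked for both classes $(1,1,8)$ and $(2,1,4)$.

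\textbf{Step 3: $\Gamma_0(31)$-invariance of $\Phi^3$.} The quotient $\Phi$ has weight zero. The character of $f_{31}$ is $\chi_{-31}$, while $\eta(\tau)\eta(31\tau)$ is a weight-one form on $\Gamma_0(31)$ whose multiplier is $\chi_{-31}$ times a cube-root-of-unity character. This follows from the standard eta-quotient criteria, since $1+31=32$ is not divisible by $24$ but $3\cdot 32$ is. Therefore $\Phi^3=f_{31}^3/(\eta\eta_{31})^3$, where $(\eta\eta_{31})^3$ is a weight-three form on $\Gamma_0(31)$ with character $\chi_{-31}$ by Newman's criterion. The characters cancel, so $\Phi^3$ is $\Gamma_0(31)$-invariant, and Step 2 extends this to $\Gamma_0(31)^+$.

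\textbf{Step 4: Hauptmodul property and expansion.} The group $\Gamma_0(31)$ has two cusps, $\infty$ and $0$, which are swapped by the Fricke involution. At $\infty$, Proposition~\ref{prop:thetaU} gives $\Phi^3=q^{-1}U(1,31)^3$ with $U(1,31)=1+O(q)$, so there is a simple pole. The function $\eta\eta_{31}$ has no zeros in $\HH$, so $\Phi^3$ is holomorphic in $\HH$. On $X_0(31)^+$, which has a single cusp, $\Phi^3$ therefore has exactly one simple pole. Consequently it has degree one and is a Hauptmodul; the genus of $X_0(31)^+$ being zero is consistent with, and in fact implied by, this. Finally, we will expand $U(1,31)^3$ from \eqref{eq:U31}, where only $\RH(q)$ and $\RG(q)$ contribute below $q^6$, to read off \eqref{eq:Haupt}. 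The constant term is $0$ because $\RH=1+q^2+\cdots$ has no linear term, which is the normalization.
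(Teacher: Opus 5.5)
Your proposal is correct and follows the paper's proof essentially step for step: Poisson summation with $31M_Q^{-1}$ integrally equivalent to $M_Q$, the matching factor $-i\sqrt{31}\,\tau$ from the eta product, the eta-quotient criterion for $\eta(\tau)^3\eta(31\tau)^3$ (weight three, character $\chi_{-31}$), and a single simple pole on the Fricke quotient forcing degree one. Two small remarks: first, Poisson summation already fixes the factor $-i\sqrt{31}\,\tau$, so your fixed-point sign check is redundant, and with the normalization $(\sqrt{31}\,\tau)^{-1}$ the pseudo-eigenvalue is $-i$, not $+1$; second, in the final expansion the term $-q^6\RG(q)$ does contribute, because $U(1,31)\equiv\RH(q)-q^6\pmod{q^7}$, which lowers the $q^5$ coefficient of $\Phi^3$ from $16$ to $13$.
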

\begin{proof}
For either quadratic form $Q=(a,b,c)$, its Gram matrix
$M_Q=\left(\begin{smallmatrix}2a&b\\b&2c\end{smallmatrix}\right)$ has
determinant $31$. The matrix $31M_Q^{-1}$ is integrally equivalent to
$M_Q$ by interchanging coordinates and changing one sign.
Poisson summation gives
\[
 \Theta_Q\left(-\frac1{31\tau}\right)
 =-i\sqrt{31}\,\tau\Theta_Q(\tau).
\]
The eta product in \eqref{eq:theta-target} transforms by the same factor,
which proves \eqref{eq:Fricke}.

The usual eta-quotient criterion applies to
$\eta(\tau)^3\eta(31\tau)^3$: both congruence sums are $96$, and the
weight and character are $3$ and $\chi_{-31}$. The numerator $f_{31}^3$
has the same weight and character. Thus their quotient is modular on
$\Gamma_0(31)$ and, by \eqref{eq:Fricke}, on its Fricke extension.
Eta has no zeros on $\HH$, so the quotient has no poles there.
Direct expansion gives \eqref{eq:Haupt}. There are two cusps on
$\Gamma_0(31)$, interchanged by Fricke; the quotient has a simple pole
at each. They give one simple pole on the Fricke quotient. A nonconstant
meromorphic function with one simple pole defines a degree-one map to
the sphere, proving the Hauptmodul assertion and the stated normalization.
\end{proof}

\subsection*{Identification with \texorpdfstring{$T_{31A}$}{T31A} and the proposed new identity}
In Conway--Norton's notation \cite{CN}, the Monster series $T_{31A}$
is the normalized Hauptmodul for $\Gamma_0(31)^+$. Uniqueness and
Proposition~\ref{prop:Fricke} therefore give
\begin{equation}\label{eq:moonshine}
 T_{31A}(\tau)=\Phi(\tau)^3
 =q^{-1}\bigl(\RH(q)\RG(q^{31})-q^6\RG(q)\RH(q^{31})\bigr)^3.
\end{equation}
OEIS A058628 \cite{OEIS31} records the same coefficient formula;
its coefficient indexing begins at $q^{-1}$, accounting for the
explicit $q^{-1}$ here. Thus \eqref{eq:moonshine} is a classical
identification, not a new moonshine evaluation. The new claim is that
the positive coupled sum \eqref{eq:positive} supplies its cube root.
Conversely, the conjectural cubed identity implies
Conjecture~\ref{conj:main}: the two cube roots agree near $q=0$, and
then throughout $|q|<1$ by analytic continuation.

The related class-$93A$ formula \cite{OEIS93} uses a rescaled variable:
\begin{equation}\label{eq:moonshine93}
 T_{93A}(\tau)=\Phi(3\tau)=q^{-1}U(1,31)\big|_{q\mapsto q^3},
 \qquad T_{93A}(\tau)^3=T_{31A}(3\tau).
\end{equation}
Its first terms are $q^{-1}+q^5+q^8+q^{11}+\cdots$.
The factor and rescaling are essential when comparing the two
OEIS coefficient conventions.

Proposition~\ref{prop:Fricke} also implies that $f_{31}$ has a zero
in $\HH$: the Hauptmodul takes the value $0$, and its only cusp is
its pole. Since eta has no zeros on $\HH$, a zero of $\Phi^3$ is a
zero of $f_{31}$. A finite product or quotient
of factors $(q^r;q^N)_\infty$, $r,N>0$, has none. Thus this target is
not a single finite Euler product quotient of that form. In particular,
the shift from a residue-class product to a theta difference is a
structural feature of the candidate, not just a choice of notation.

For comparison, the definition of $U(1,31)$ immediately yields
\begin{equation}\label{eq:uncoupled}
 U(1,31)=\sum_{r,s\geq0}
 \frac{q^{r^2+31s^2}(q^r-q^{31s+6})}
 {(q;q)_r(q^{31};q^{31})_s}.
\end{equation}
This is already a rank-two expression, but its quadratic form is
uncoupled and its numerator signed. The proposed formula
\eqref{eq:positive} uses the different quadratic datum
\eqref{eq:matrices} and has a positive numerator. It is this explicit
realization, rather than the first appearance of rank two at level $31$,
that remains to be established.

\section{Nonmodularity of every individual sum}\label{sec:exclusion}
\subsection{The cubic saddle}
The determinant of $AD$ is $155$, and its leading principal entry is
positive. The symmetrizable Nahm equations therefore have a unique
solution $(x,y)\in(0,1)^2$ \cite[Section~2.1]{Mizuno}. They are
\begin{equation}\label{eq:nahm}
 1-x=x^6y,\qquad 1-y=x^{31}y^6.
\end{equation}

\begin{lemma}\label{lem:saddle}
Let $\rho$ be the root in $(0,1)$ of
\begin{equation}\label{eq:cubic}
 f(X)=X^3-5X^2+2X+1.
\end{equation}
The solution of \eqref{eq:nahm} is
\begin{equation}\label{eq:rho-sigma}
 x=\rho,\qquad
 y=\sigma=\frac{1-\rho}{\rho^6}
       =469\rho^2-2479\rho+1646.
\end{equation}
The field $K=\QQ(\rho)$ is the cyclic cubic field of conductor $19$.
\end{lemma}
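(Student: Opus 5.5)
The plan is to verify the claimed solution directly rather than solve \eqref{eq:nahm} from scratch. The uniqueness of the solution in $(0,1)^2$ is already quoted from \cite[Section~2.1]{Mizuno}, so it suffices to exhibit one solution in the open unit square. I will first remove $y$. The first equation of \eqref{eq:nahm} is equivalent to $y=(1-x)/x^6$ for $x\neq0$, and then
\[
 x^{31}y^6=\frac{(1-x)^6}{x^{5}}.
\]
Substituting into the second equation and clearing the denominator $x^6$ reduces the system to the single polynomial condition
\[
 P(x):=x(1-x)^6-x^6+1-x=0 .
\]
The key algebraic step is to show that $f$ divides $P$ in $\ZZ[X]$. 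I would do this by Euclidean division of the degree-seven polynomial $P$ by the monic cubic $f$, checking that the remainder vanishes; this is a routine finite computation. It follows that $x=\rho$, $y=(1-\rho)/\rho^6$ satisfies \eqref{eq:nahm}.

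Next I treat the cubic and the explicit form of $\sigma$. Since $f(0)=1$ and $f(1)=-1$, there is a root in $(0,1)$. Moreover $f(0.6)>0>f(0.8)$, so $\rho\approx0.78$. The integral basis expression for $\sigma$ comes from reducing $(1-X)X^{-6}$ modulo $f$. Because $f(X)=X(X^2-5X+2)+1$, we have $X^{-1}\equiv-(X^2-5X+2)$, and raising this to the sixth power and reducing gives a quadratic in $X$. Equivalently, I would check that
\[
 (469X^2-2479X+1646)X^6-(1-X)\equiv0\pmod f .
\]
Then $\sigma\in(0,1)$ follows from an interval enclosure of $\rho$. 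Numerically $\sigma\approx0.22/0.225$, which is close to $1$, so the enclosure must be tight, say to $10^{-6}$. It is safer to certify $0<1-\rho<\rho^6$ directly with rational bounds on $\rho$ than to evaluate the quadratic with its large coefficients.

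Finally, the field. The polynomial $f$ has no rational roots, since $f(\pm1)=-1,-7$, so it is irreducible. Its discriminant is
\[
 a^2b^2-4b^3-4a^3c-27c^2+18abc=100-32+500-27-180=361=19^2,
\]
with $(a,b,c)=(-5,2,1)$. A square discriminant makes $K$ Galois, hence cyclic cubic. For a cyclic cubic field the discriminant is the square of the conductor. The field discriminant divides $361$ with a square cofactor, and it cannot be $1$, so $d_K=361$ and the conductor is $19$. I expect the only step needing real care to be the certified inequality $\sigma<1$, because $\sigma$ is numerically near $1$. The divisibility $f\mid P$ and the reduction of $\sigma$ are mechanical, though they should be displayed or carried out exactly.
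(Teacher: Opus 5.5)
Your outline follows the paper's proof. You eliminate $y$ to get $P(x)=x(1-x)^6-x^6+1-x$, relate it to $f$, and reduce $(1-X)/X^6$ modulo $f$. You then certify $\sigma<1$ with a tight rational enclosure of $\rho$, and obtain conductor $19$ from irreducibility and $\operatorname{disc}f=19^2$.

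The one structural difference is how uniqueness of the Nahm solution is obtained. The paper records the full factorization $P(x)=(x^2-x+1)^2f(x)$; the quotient $(x^2-x+1)^2$ is exactly what your Euclidean division returns. Since $x^2-x+1$ has no real zero, every solution in $(0,1)^2$ has $x$ equal to a root of $f$ in $(0,1)$, so the lemma proves uniqueness itself rather than importing it from Mizuno. Relying on the citation is legitimate, but divisibility alone gives you less.

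There is one step you have not supplied. You show that $f$ has \emph{a} root in $(0,1)$, but the lemma defines $\rho$ as \emph{the} root there. Your enclosure of $\rho$ also presupposes that you know which root you are bracketing. Mizuno's uniqueness does not fill this in. A second root $r\in(0,1)$ with $(1-r)/r^6\ge1$ would not give a second Nahm solution, so that theorem does not exclude it. The fix is one line: $f(-1)=-7<0<1=f(0)$ and $f(4)=-7<0<11=f(5)$ place the other two real roots outside $(0,1)$. The paper instead notes that $f'$ has a single zero in $(0,1)$, where $f$ has a local maximum.

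Separately, your numerical illustration understates how close $\sigma$ is to $1$. Here $1-\rho\approx0.22188$ and $\rho^6\approx0.22197$, so $\sigma\approx0.99958$, not $0.22/0.225$. The zero of $X^6+X-1$ lies near $0.77809$, only about $3.4\times10^{-5}$ below $\rho$. Your proposed $10^{-6}$ enclosure therefore works; the paper uses $7781238/10^7<\rho<7781239/10^7$. A coarser bound such as $\rho>0.778$ would not certify $\sigma<1$.
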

\begin{proof}
Eliminate $y$ from \eqref{eq:nahm}. After cancellation of a nonzero
power of $x$, the second equation becomes
\[
 x^6+x-1-x(1-x)^6
 =-(x^2-x+1)^2(x^3-5x^2+2x+1)=0.
\]
The quadratic factor has no real zero. The derivative of $f$ has just
one zero in $(0,1)$, at which $f$ has a local maximum, while
$f(0)=1$ and $f(1)=-1$. Thus $f$ has exactly one root in $(0,1)$.
Reduction modulo $f$ gives the last expression in
\eqref{eq:rho-sigma}. For explicit rational bounds,
\[
 \frac{7781238}{10^7}<\rho<\frac{7781239}{10^7}.
\]
The function $(1-X)/X^6$ is decreasing on $(0,1)$, and its values at
these endpoints both lie in $(0,1)$.

The polynomial $f$ is irreducible by the rational-root test and has
discriminant $361=19^2$. An irreducible cubic with square discriminant
has cyclic Galois group. Its field discriminant is $19^2$ (the only
other possibility from the index formula would be $1$), and the
conductor--discriminant formula gives conductor $19$.
\end{proof}

\subsection{Radial asymptotics}
Put $d_1=1$, $d_2=31$, $(z_1,z_2)=(\rho,\sigma)$, and
$(b_1,b_2)=(a,b)$. With $q=e^{-\varepsilon}$ and
$\varepsilon\downarrow0$, the saddle expansion has the form
\begin{equation}\label{eq:expansion}
 F_{a,b}(e^{-\varepsilon})
 =\kappa e^{\Lambda/\varepsilon}
 \left(1+\beta_1(a,b)\varepsilon
       +\beta_2(a,b)\varepsilon^2+O(\varepsilon^3)\right),
 \qquad \kappa>0.
\end{equation}
We describe the normalization and the coefficients explicitly.
For $t_i=d_i\varepsilon n_i$, define
\begin{align}
 \Psi(t)&=\sum_{i=1}^2
       \frac{\pi^2/6-\Li_2(e^{-t_i})}{d_i}
       -\frac12t^{\mathsf T}D^{-1}At,\label{eq:phase}\\
 h(t)&=\exp\left(-\sum_{i=1}^2\frac{b_it_i}{d_i}\right)
              \prod_{i=1}^2(1-e^{-t_i})^{-1/2}.\label{eq:amplitude}
\end{align}
The critical point is $t_i^{(0)}=-\log z_i$. Write
\begin{equation}\label{eq:HC}
 w_i=\frac{z_i}{1-z_i},\qquad
 \mathsf H=-\Psi''(t^{(0)})
   =D^{-1}\bigl(A+\diag(w_1,w_2)\bigr),\qquad \Sigma=\mathsf H^{-1}.
\end{equation}
The matrix $\mathsf H$ is symmetric positive definite. Then
\[
 \Lambda=\Psi(t^{(0)}),\qquad
 \kappa=\frac{h(t^{(0)})}{\sqrt{\det D\,\det \mathsf H}}.
\]
In particular, there is no power of $\varepsilon$ in the leading
prefactor of \eqref{eq:expansion}.

For completeness, if
$L(z)=\Li_2(z)+\frac12\log z\log(1-z)$ is the unshifted Rogers
dilogarithm, the saddle equations give
\[
 \Lambda=\left(\frac{\pi^2}{6}-L(\rho)\right)
          +\frac1{31}\left(\frac{\pi^2}{6}-L(\sigma)\right)>0.
\]
The value recorded in \cite[Table~3, row~8]{Brown} is proved here
by the explicit five-term certificate in Appendix~\ref{app:dilog}:
\begin{equation}\label{eq:dilog-value}
 31L(\rho)+L(\sigma)=4\pi^2,\qquad \Lambda=\frac{4\pi^2}{93}.
\end{equation}
This establishes the rational leading action without relying on a
numerical identification of a Bloch-group torsion value. The
individual-sum exclusion below does not use this evaluation; its
obstruction occurs in the next two terms.

The analytic existence of \eqref{eq:expansion}, to every order, follows
from \cite[Theorem~2.1]{Mizuno} at the root of unity $1$.
The coefficient formulas below also follow directly from
the local expansion of \eqref{eq:phase}--\eqref{eq:amplitude}.
In fact, the logarithm of a summand, apart from the constant
$\log(\varepsilon\sqrt{31}/(2\pi))$, is
\begin{equation}\label{eq:local-log}
 \varepsilon^{-1}\Psi(t)+\log h(t)+\varepsilon c_0(t)
   +O(\varepsilon^3),\qquad
 c_0(t)=-\sum_{i=1}^2d_i
       \left(\frac1{24}+\frac{e^{-t_i}}{12(1-e^{-t_i})}\right).
\end{equation}
This expansion is uniform in a fixed neighborhood of $t^{(0)}$.
It follows by applying Euler--Maclaurin to each finite
$q$-Pochhammer symbol. The absence of an order-$\varepsilon^2$ term
in \eqref{eq:local-log} will be used for the second correction.

\begin{lemma}\label{lem:cusp}
If $q^cF_{a,b}$ satisfies the modularity assumptions in
Theorem~\ref{thm:individual}, then
\begin{equation}\label{eq:necessary}
 \beta_1(a,b)=c\in\QQ,\qquad
 \beta_2(a,b)-\frac12\beta_1(a,b)^2=0.
\end{equation}
\end{lemma}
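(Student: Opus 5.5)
We prove Lemma~\ref{lem:cusp} by comparing the radial expansion \eqref{eq:expansion} with the form forced by modularity at the cusp $0$. Suppose $g(\tau)=q^cF_{a,b}(q)$ is a weight-zero modular function on a finite-index subgroup $\Gamma\subset\mathrm{SL}_2(\ZZ)$, with finite-order multiplier and meromorphic at the cusps. Some power $g^M$ then has trivial multiplier on a smaller finite-index subgroup. So we may assume the multiplier is trivial, provided we keep track of how the conditions scale under $g\mapsto g^M$. Let $S=\left(\begin{smallmatrix}0&-1\\1&0\end{smallmatrix}\right)$. The function $g(-1/\tau)$ is modular on $S^{-1}\Gamma S$, which is again of finite index. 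It therefore has a finite-width Puiseux expansion at $i\infty$:
\[
 g(-1/\tau)=\sum_{n\ge n_0}c_n\tilde q^{\,n/W},\qquad \tilde q=e^{2\pi i\tau}.
\]
Put $\tau=i\varepsilon/(2\pi)$, so that $q=e^{-\varepsilon}$ and $-1/\tau=2\pi i/\varepsilon$. This gives
\[
 q^cF_{a,b}(e^{-\varepsilon})=g(\tau)=h\!\left(e^{-4\pi^2/\varepsilon}\right),
\]
where $h(x)=\sum_n c_n x^{n/W}$ is a Puiseux series in one variable.

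As $\varepsilon\downarrow0$, the right side equals $c_{n_0}e^{-4\pi^2n_0/(W\varepsilon)}\bigl(1+O(e^{-\delta/\varepsilon})\bigr)$ for some $\delta>0$. Here $c_{n_0}\ne0$: since $F_{a,b}(e^{-\varepsilon})\to\infty$ with positive terms, $g$ is not identically zero. Hence the full asymptotic expansion of $\log g$ in powers of $\varepsilon$ is exactly $C_{-1}\varepsilon^{-1}+C_0$, and every coefficient of $\varepsilon^k$ with $k\ge1$ vanishes. On the other side, \eqref{eq:expansion} and $q^c=e^{-c\varepsilon}$ give
\[
 \log\bigl(q^cF_{a,b}\bigr)=\frac{\Lambda}{\varepsilon}+\log\kappa+(\beta_1-c)\varepsilon+\Bigl(\beta_2-\tfrac12\beta_1^2\Bigr)\varepsilon^2+O(\varepsilon^3).
\]
Two features of \eqref{eq:expansion} matter here. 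First, it carries no power of $\varepsilon$ and no $\log\varepsilon$ term. Second, it is an honest asymptotic expansion, by the cited theorem of Mizuno. Uniqueness of asymptotic expansions in the scale $\varepsilon^k$ then gives $\beta_1-c=0$ and $\beta_2-\beta_1^2/2=0$. The condition $c\in\QQ$ holds by hypothesis, which proves \eqref{eq:necessary}.

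Two points need care. The first is the reduction to trivial multiplier. Passing to $g^M$ multiplies the logarithm by $M$, so the conditions become $M(\beta_1-c)=0$ and $M(\beta_2-\beta_1^2/2)=0$, which are equivalent to the original ones. Alternatively, one can skip the reduction: a finite-order multiplier only changes the exponents at the cusp $0$ to rational numbers, and the single-variable Puiseux structure is unchanged. The second point is the matching of $\log\kappa$ and the $\varepsilon^{-1}$ term. This imposes no further condition, since those terms merely fix $n_0/W$ and $c_{n_0}$. Comparing with \eqref{eq:dilog-value} is possible but not needed.

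The main obstacle is justifying that the cusp-$0$ expansion produces no $\log\varepsilon$ or power-of-$\varepsilon$ prefactor. This is where weight zero is used: a nonzero weight would introduce a factor $\tau^k$. It must be paired with the exact form of the saddle prefactor, in which the factor $\varepsilon\sqrt{31}/(2\pi)$ from \eqref{eq:local-log} cancels against the two-dimensional Gaussian integral.
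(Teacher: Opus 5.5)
Your proof is correct and follows essentially the paper's argument. You pass to a trivial multiplier, compare the weight-zero expansion at the cusp $0$, of the form $A_0e^{\lambda/\varepsilon}(1+O(e^{-\delta/\varepsilon}))$, with \eqref{eq:expansion}, and conclude that the $\varepsilon$ and $\varepsilon^2$ corrections must vanish. The only cosmetic difference is that you match coefficients of $\log(q^cF_{a,b})$, which yields $\beta_2-\tfrac12\beta_1^2$ directly. The paper instead matches $\beta_1-c$ and $\beta_2-c\beta_1+c^2/2$ and then substitutes $c=\beta_1$.
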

\begin{proof}
Pass to the kernel of the finite-order multiplier. At the cusp $0$,
meromorphy gives a Laurent expansion in a cusp parameter of finite
width. Along $\tau=i\varepsilon/(2\pi)$, its first nonzero term has
the form
\[
 A_0e^{\lambda/\varepsilon}
 \left(1+O(e^{-\delta/\varepsilon})\right),\qquad
 A_0\ne0,\quad \delta>0.
\]
There is no algebraic correction in $\varepsilon$, since the weight
is zero. Multiplication of \eqref{eq:expansion} by
$q^c=e^{-c\varepsilon}$ gives successive coefficients
$\beta_1-c$ and $\beta_2-c\beta_1+c^2/2$.
Both must vanish, which proves \eqref{eq:necessary}.
\end{proof}

\subsection{The first correction and its rational locus}
All of the following computations take place in $K=\QQ(\rho)$, using
\begin{equation}\label{eq:reduction}
 \rho^3=5\rho^2-2\rho-1.
\end{equation}
The data in \eqref{eq:HC} reduce to
\begin{equation}\label{eq:wexplicit}
 w_1=1+4\rho-\rho^2,\qquad
 w_2=774+2546\rho-603\rho^2.
\end{equation}
\begin{equation}\label{eq:Cexplicit}
 \Sigma=\frac1{361}
 \begin{pmatrix}
 -566+949\rho-222\rho^2&3596-5611\rho+1271\rho^2\\
 3596-5611\rho+1271\rho^2&-20832+32302\rho-7099\rho^2
 \end{pmatrix}.
\end{equation}
Thus the field reduction, covariance, and all denominators needed
in the calculation are specified explicitly.

Let $\xi=(\xi_1,\xi_2)$ be a centered Gaussian vector with covariance $\Sigma$.
At the saddle define
\begin{align}
 g_i&=-\frac{b_i}{d_i}-\frac{w_i}{2},&
 h_i&=\frac{w_i(1+w_i)}2,\label{eq:gh}\\
 \psi_{3,i}&=\frac{w_i(1+w_i)}{d_i},&
 \psi_{4,i}&=-\frac{w_i(1+w_i)(1+2w_i)}{d_i},\label{eq:TV}\\
 c_0&=-\sum_{i=1}^2d_i\left(\frac1{24}+\frac{w_i}{12}\right).
 \label{eq:c0}
\end{align}
Here $g_i,h_i$ are the first and second derivatives of $\log h(t)$;
$\psi_{3,i},\psi_{4,i}$ are the pure third and fourth derivatives of $\Psi(t)$.
Set
\[
 L_1=\sum_i(g_i\xi_i+\psi_{3,i}\xi_i^3/6),\qquad
 L_2=c_0+\sum_i(h_i\xi_i^2/2+\psi_{4,i}\xi_i^4/24).
\]
Expanding \eqref{eq:local-log} at
$t=t^{(0)}+\sqrt{\varepsilon}\xi$ gives
\begin{equation}\label{eq:beta1gauss}
 \beta_1=\EE(L_2+L_1^2/2).
\end{equation}
Wick's formula expresses this as
\begin{align}
 \beta_1={}&c_0+\frac12\sum_i h_i\Sigma_{ii}
       +\frac18\sum_i \psi_{4,i}\Sigma_{ii}^2
       +\frac12\sum_{i,j}g_ig_j\Sigma_{ij}\notag\\
 &+\frac12\sum_{i,j}g_i\psi_{3,j}\Sigma_{ij}\Sigma_{jj}
       +\frac1{72}\sum_{i,j}\psi_{3,i}\psi_{3,j}
          \left(9\Sigma_{ii}\Sigma_{jj}\Sigma_{ij}+6\Sigma_{ij}^3\right).
 \label{eq:beta1contracted}
\end{align}

\begin{proposition}\label{prop:first}
The coefficient $\beta_1$ is
\begin{equation}\label{eq:beta1explicit}
 \beta_1(a,b)=B_0(a,b)
       +\frac{P(a,b)\rho-Q(a,b)\rho^2}{22382},
\end{equation}
where
\begin{align}
 B_0&=-\frac{
 26319a^2-10788ab+151404a+1008b^2-28179b+161107}{33573},
 \label{eq:B0}\\
 P&=29419a^2-11222ab+154907a+1042b^2-28210b+162378,
 \label{eq:P}\\
 Q&=6882a^2-2542ab+28830a+229b^2-5239b+32116.
 \label{eq:Q}
\end{align}
For $a,b\in\QQ$, the number $\beta_1(a,b)$ is rational if and only if
\begin{equation}\label{eq:unique}
 (a,b)=\left(\frac{19}{3},\frac{124}{3}\right),
 \qquad \beta_1=\frac{8806}{3249}.
\end{equation}
\end{proposition}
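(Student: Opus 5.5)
The proof has two parts: an exact evaluation of \eqref{eq:beta1contracted} in $K$, and then a linear-algebra argument over the basis $1,\rho,\rho^2$.

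\emph{Step 1: evaluating $\beta_1$.} Every ingredient of \eqref{eq:beta1contracted} is already an element of $K$ with the explicit forms \eqref{eq:wexplicit} and \eqref{eq:Cexplicit}. The parameters $(a,b)$ enter only through $g_i=-b_i/d_i-w_i/2$, and they enter affinely. So $\beta_1$ is a polynomial in $(a,b)$ of total degree at most two. The quadratic part is $\frac12 g^{\mathsf T}\Sigma g$ restricted to the $(a,b)$-terms, namely
\[
 \tfrac12\bigl(a^2\Sigma_{11}+2ab\,\Sigma_{12}/31+b^2\Sigma_{22}/961\bigr).
\]
The linear part comes from the cross terms of $g^{\mathsf T}\Sigma g$ and from $\sum g_i\psi_{3,j}\Sigma_{ij}\Sigma_{jj}$. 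The constant term collects everything else.

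I would first verify that $\mathsf H\Sigma=I$, using \eqref{eq:HC} and the reduction \eqref{eq:reduction}. Then I would expand each of the six Wick terms in the basis $1,\rho,\rho^2$, reducing modulo $f$ throughout. This is a finite exact computation with the explicit data. The only delicate point is bookkeeping of the denominators: $361$ from $\Sigma$, together with the factors $31$, $24$, $72$ and $12$. These combine into the stated denominators $33573$ and $22382$. Collecting coefficients of $1$, $\rho$ and $\rho^2$ should give \eqref{eq:B0}, \eqref{eq:P} and \eqref{eq:Q}. As a sanity check, I would compare the result numerically with the Gaussian expectation \eqref{eq:beta1gauss}, using the rational bounds on $\rho$ from Lemma~\ref{lem:saddle}.

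\emph{Step 2: the rationality criterion.} Since $f$ is irreducible, $1,\rho,\rho^2$ is a $\QQ$-basis of $K$. For rational $a,b$ the numbers $B_0$, $P$ and $Q$ are rational. Hence $\beta_1\in\QQ$ if and only if $P(a,b)=0$ and $Q(a,b)=0$. This is a system of two conics in the plane, and I would solve it by elimination in the following order.
\begin{enumerate}
\item Compute the combination $6882P-29419Q$, which kills $a^2$.
\item The result is linear in $a$ with coefficients depending on $b$, so solve it for $a$ as a rational function of $b$.
\item Substitute this expression into $Q$ and clear denominators. This gives a polynomial in $b$ of degree at most four; equivalently, it is the resultant $\operatorname{Res}_a(P,Q)$.
\item Factor this polynomial over $\QQ$. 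The claim is that its only rational root is $b=124/3$, with the remaining factors irreducible or having no rational roots.
\item Recover $a=19/3$ from the linear relation, after checking that its leading coefficient does not vanish at $b=124/3$.
\item Verify directly that $P(19/3,124/3)=Q(19/3,124/3)=0$.
\end{enumerate}
Finally, evaluating $B_0(19/3,124/3)$ gives $8806/3249$; note that $3249=9\cdot361$.

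\emph{Main obstacle.} The conceptual steps are routine. The risk lies in Step 1: an error in a single Wick contraction, for instance in the $\psi_3\psi_3$ term with the coefficients $9$ and $6$, would corrupt $P$ and $Q$. I would therefore cross-check the symbolic expansion against a high-precision numerical saddle-point evaluation at two or three rational pairs $(a,b)$. In Step 2, the point needing care is the degenerate case where the leading coefficient of the linear relation in $a$ vanishes. Those finitely many values of $b$ must be checked separately against $P=Q=0$.
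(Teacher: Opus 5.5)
Your proposal is correct and follows the paper's proof: substitute the explicit data into \eqref{eq:beta1contracted} and reduce modulo $f$, deduce $P=Q=0$ from the linear independence of $1,\rho,\rho^2$, and then eliminate one variable by a resultant. The only difference is cosmetic: the paper eliminates $b$, getting $\Res_b(P,Q)=-961(3a-19)\mathcal R(a)$ with $\mathcal R$ irreducible (shown by reduction mod $3$), and then factors $P(19/3,b)$ and $Q(19/3,b)$, whereas you eliminate $a$ first; either order works.
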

\begin{proof}
Substitute \eqref{eq:wexplicit}--\eqref{eq:c0} into
\eqref{eq:beta1contracted} and reduce with \eqref{eq:reduction}.
The respective coefficients of $1,\rho,\rho^2$ are
$B_0,P/22382,-Q/22382$, giving \eqref{eq:beta1explicit}.
This is a quadratic polynomial calculation over $\QQ$.
Since $1,\rho,\rho^2$ are linearly independent over $\QQ$,
rationality is equivalent to $P=Q=0$.

Write $P=p_2b^2+p_1b+p_0$ and $Q=q_2b^2+q_1b+q_0$.
The quadratic resultant formula gives the explicit elimination
\begin{align}
 \Res_b(P,Q)
 &=(p_2q_0-p_0q_2)^2
   -(p_2q_1-p_1q_2)(p_1q_0-p_0q_1)\notag\\
 &=-961(3a-19)\mathcal R(a),\label{eq:resultant}\\
 \mathcal R(a)
 &=1620529a^3-30790051a^2-202559760a+3789270004.
 \label{eq:eliminant}
\end{align}
The reduction of $\mathcal R$ modulo $3$ is $a^3-a^2+1$.
Its values at $0,1,2$ are nonzero in $\mathbb F_3$, so it is
irreducible over $\mathbb F_3$, and hence $\mathcal R$ is irreducible
over $\QQ$. Therefore a common rational zero of $P,Q$ must have
$a=19/3$. At this value,
\begin{align}
 P(19/3,b)&=\frac29(3b-124)(1563b-84320),\notag\\
 Q(19/3,b)&=\frac13(3b-124)(229b-11873).
 \label{eq:specialized}
\end{align}
The other two roots differ, so $b=124/3$. Substitution in
\eqref{eq:B0} gives the stated value of $\beta_1$.
\end{proof}

In particular, integer linear terms are already excluded by the first
condition in Lemma~\ref{lem:cusp}. The rational pair
\eqref{eq:unique} requires the next coefficient.

\subsection{The second correction and completion of the proof}
We now fix \eqref{eq:unique}. In addition to
\eqref{eq:gh}--\eqref{eq:c0}, put
\begin{align}
 j_i&=-\frac{w_i(1+w_i)(1+2w_i)}2,&
 k_i&=\frac{w_i(1+w_i)(1+6w_i+6w_i^2)}2,\label{eq:jk}\\
 \psi_{5,i}&=\frac{w_i(1+w_i)(1+6w_i+6w_i^2)}{d_i},&
 \psi_{6,i}&=-\frac{w_i(1+w_i)(1+14w_i+36w_i^2+24w_i^3)}{d_i},
 \label{eq:WZ}\\
 c_i'&=\frac{d_iw_i(1+w_i)}{12},&
 c_i''&=-\frac{d_iw_i(1+w_i)(1+2w_i)}{12}.
 \label{eq:cderivatives}
\end{align}
The quantities $j_i,k_i$ are the third and fourth derivatives of
$\log h(t)$; $\psi_{5,i},\psi_{6,i}$ are the fifth and sixth pure derivatives of
$\Psi(t)$. The last line contains the derivatives of $c_0(t)$.
Define
\begin{align}
 L_3&=\sum_i(c_i'\xi_i+j_i\xi_i^3/6+\psi_{5,i}\xi_i^5/120),\notag\\
 L_4&=\sum_i(c_i''\xi_i^2/2+k_i\xi_i^4/24+\psi_{6,i}\xi_i^6/720).
 \label{eq:L34}
\end{align}

\begin{lemma}\label{lem:second}
At the pair \eqref{eq:unique},
\begin{equation}\label{eq:beta2value}
 \beta_2=\frac{33813353}{10556001},\qquad
 \beta_2-\frac12\beta_1^2=-\frac{1653155}{3518667}.
\end{equation}
\end{lemma}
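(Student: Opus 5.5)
We compute $\beta_2$ by carrying the local expansion \eqref{eq:local-log} one order further than in Proposition~\ref{prop:first}, and reduce the result exactly in $K=\QQ(\rho)$ using \eqref{eq:reduction}.

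Substitute $t=t^{(0)}+\sqrt\varepsilon\,\xi$ into \eqref{eq:local-log} and expand in powers of $\sqrt\varepsilon$. The exponent, after removing $\Lambda/\varepsilon$ and the Gaussian quadratic part, becomes
\[
 \sqrt\varepsilon L_1+\varepsilon L_2+\varepsilon^{3/2}L_3+\varepsilon^2L_4+O(\varepsilon^{5/2}).
\]
This uses \eqref{eq:gh}--\eqref{eq:TV} and \eqref{eq:jk}--\eqref{eq:cderivatives}. One point needs checking here: the derivatives of $\Psi$, $\log h$ and $c_0$ are pure, with no mixed partials beyond those in the quadratic form. This holds because $\Psi$ differs from a separable function only by the quadratic term $-\tfrac12t^{\mathsf T}D^{-1}At$, and $\log h$ and $c_0$ are sums of one-variable functions. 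The derivative values follow from $\frac{d}{dt}$ acting on $w=e^{-t}/(1-e^{-t})$ through $w'=-w(1+w)$, which produces the stated polynomials in $w_i$.

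The absence of an $\varepsilon^2$ term in \eqref{eq:local-log} means $c_0(t)$ enters only through $L_2,L_3,L_4$ as listed. Exponentiating and taking the Gaussian expectation (odd moments vanish) gives
\[
 1+\beta_1\varepsilon+\beta_2\varepsilon^2=\EE\exp(\cdots),
\]
where
\[
 \beta_2=\EE\Bigl(L_4+L_1L_3+\tfrac12L_2^2+\tfrac12L_1^2L_2+\tfrac1{24}L_1^4\Bigr).
\]
The first correction obtained this way agrees with \eqref{eq:beta1gauss}. The Euler--Maclaurin discretization of the lattice sum contributes nothing at any polynomial order, since the summand is smooth in $t$ at lattice spacing $d_i\varepsilon$; the error is exponentially small. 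So this Gaussian expectation is exactly $\beta_2$, and its existence is guaranteed by \cite[Theorem~2.1]{Mizuno}.

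Next, evaluate each expectation by Wick's formula in terms of $\Sigma$ from \eqref{eq:Cexplicit}. The highest moment needed is degree $12$, coming from $L_1^4$ with four cubic terms. With two coordinates, this is a finite sum over pairings that we organize by multi-index: $\EE\,\xi_1^{m}\xi_2^{n}$ equals a polynomial in $\Sigma_{11},\Sigma_{12},\Sigma_{22}$ given by the standard bivariate Gaussian moment formula. We then substitute $a=19/3$ and $b=124/3$ into $g_i$ and reduce everything modulo $f$ to the basis $1,\rho,\rho^2$.

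The claim in \eqref{eq:beta2value} that $\beta_2$ is rational is itself a nontrivial consistency check: the $\rho$ and $\rho^2$ coefficients must cancel. Once that holds, subtracting $\tfrac12(8806/3249)^2$ gives the second value, and its nonvanishing is the content used afterwards.

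The main obstacle is the bookkeeping of the degree-twelve Wick contractions and the exact arithmetic in $K$. Denominators grow as powers of $361$, and indeed $10556001=3249^2$ up to small factors. I would do this step by exact symbolic computation over $\QQ[X]/(f)$, not numerically, and cross-check it two ways: against a high-precision numerical evaluation of $F_{19/3,124/3}(e^{-\varepsilon})$ for several small $\varepsilon$, fitted to \eqref{eq:expansion} with $\Lambda=4\pi^2/93$ and $\kappa$ known; and by confirming that the irrational parts of $\beta_2$ cancel.
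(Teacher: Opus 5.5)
Your proposal is correct and follows the paper's proof essentially step for step. Both use the same order-$\varepsilon^2$ formula $\beta_2=\EE\bigl(L_4+L_1L_3+\tfrac12L_2^2+\tfrac12L_1^2L_2+\tfrac1{24}L_1^4\bigr)$, evaluated by Gaussian moments up to degree twelve with exact reduction in $\QQ(\rho)$ at $(a,b)=(19/3,124/3)$, and both obtain the second value from $\beta_1=8806/3249$. One small correction: $10556001$ is exactly $3249^2=3^4\cdot19^4$, not $3249^2$ up to small factors, and this exactness is what makes the subtraction of $\tfrac12\beta_1^2$ immediate.
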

\begin{proof}
The expansion of the exponential of \eqref{eq:local-log} gives
\begin{equation}\label{eq:beta2gauss}
 \beta_2=
 \EE\left(L_4+L_1L_3+\frac12L_2^2
                 +\frac12L_1^2L_2+\frac1{24}L_1^4\right).
\end{equation}
Indeed, after removal of the leading Gaussian, the exponent is
$\sqrt\varepsilon L_1+\varepsilon L_2+
\varepsilon^{3/2}L_3+\varepsilon^2L_4+\cdots$.
Odd Gaussian moments vanish. The five terms in
\eqref{eq:beta2gauss} are exactly the remaining contributions of
order $\varepsilon^2$.

Here is a finite algebraic specification of the evaluation.
At \eqref{eq:unique} the two linear coefficients are
\[
 g_1=\frac{\rho^2}{2}-2\rho-\frac{41}{6},\qquad
 g_2=\frac{603\rho^2}{2}-1273\rho-\frac{1165}{3}.
\]
Use these values, \eqref{eq:wexplicit}--\eqref{eq:Cexplicit}, and
\eqref{eq:gh}--\eqref{eq:L34}.
For a polynomial $p(u,v)$ let
\[
 \mathfrak E_\Sigma(p)=
 \left.
 \sum_{\ell=0}^{6}\frac1{2^\ell\ell!}
 \left(\Sigma_{11}\partial_u^2+
       2\Sigma_{12}\partial_u\partial_v+
       \Sigma_{22}\partial_v^2\right)^\ell p(u,v)
 \right|_{u=v=0}.
\]
This is Gaussian expectation for polynomials of degree at most twelve,
which includes every term of \eqref{eq:beta2gauss}.
Consequently the exact arithmetic identity used here is
\begin{equation}\label{eq:certificate}
 \mathfrak E_\Sigma\left(
 L_4+L_1L_3+\tfrac12L_2^2+
 \tfrac12L_1^2L_2+\tfrac1{24}L_1^4
 \right)
 =\frac{33813353}{10556001}\qquad\text{in }K.
\end{equation}
Here $\xi_1=u$ and $\xi_2=v$. All entries are given explicitly above; only polynomial
multiplication, differentiation, and the reduction
\eqref{eq:reduction} are involved. An equivalent way to evaluate the
left side is to use $M_{00}=1$, zero moments of odd total degree,
and the recurrence
\begin{align}
 M_{r,s}&=(r-1)\Sigma_{11}M_{r-2,s}
              +s\Sigma_{12}M_{r-1,s-1}\quad(r>0),\notag\\
 M_{0,s}&=(s-1)\Sigma_{22}M_{0,s-2}\quad(s>0),
 \label{eq:momentrecurrence}
\end{align}
where negative indices give zero.
The reduced coefficients of $1,\rho,\rho^2$ in the left side of
\eqref{eq:certificate} are respectively
$33813353/10556001$, $0$, and $0$.
Finally, $\beta_1=8806/3249$ and $3249^2=10556001$ give
the second identity in \eqref{eq:beta2value}.
\end{proof}

\begin{proof}[Proof of Theorem~\ref{thm:individual}]
If $q^cF_{a,b}$ were modular as stated, Lemma~\ref{lem:cusp}
would make $\beta_1(a,b)$ rational. Proposition~\ref{prop:first}
then forces \eqref{eq:unique} and $c=8806/3249$.
Lemma~\ref{lem:second} shows that the second necessary condition
in \eqref{eq:necessary} fails at this pair.
\end{proof}

The role of the constant term $c$ is now explicit. It removes the
first logarithmic correction, but cannot change the second one.
Thus the last nonzero rational number in \eqref{eq:beta2value}
is an obstruction to every rational normalization.

\begin{corollary}\label{cor:vector}
Let $\mathbf f$ be a weight-zero vector-valued modular function on a
finite-index subgroup of $\mathrm{SL}_2(\ZZ)$, meromorphic at its cusps,
whose multiplier representation has finite image. No component of
$\mathbf f$ can be $q^cF_{a,b}$ with $a,b,c\in\QQ$.
\end{corollary}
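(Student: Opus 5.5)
The plan is to reduce the vector-valued statement to the scalar one already established in Theorem~\ref{thm:individual}. More precisely, I would reduce it to the cusp-analysis argument of Lemma~\ref{lem:cusp}, since that argument only uses local behaviour of a single component at one cusp.

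Suppose, for contradiction, that a component $f_k$ of $\mathbf f$ equals $q^cF_{a,b}$. Let $\Gamma$ be the finite-index subgroup and $\varrho$ the multiplier representation, with finite image. Then $\Gamma'=\Gamma\cap\ker\varrho$ has finite index in $\mathrm{SL}_2(\ZZ)$. On $\Gamma'$ each component of $\mathbf f$ transforms trivially in weight zero. So $f_k$ is a scalar weight-zero modular function on $\Gamma'$ with trivial multiplier. Meromorphy of $\mathbf f$ at the cusps gives, for each component, a Laurent expansion in a finite-width cusp parameter. This is exactly the hypothesis of Theorem~\ref{thm:individual} for $q^cF_{a,b}$, which then gives the contradiction.

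One point needs care. Meromorphy of the vector at the cusp $0$ is usually phrased through $\mathbf f|\gamma$ for $\gamma\in\mathrm{SL}_2(\ZZ)$ sending $\infty$ to $0$, and that slash is a vector whose entries mix components via $\varrho$-type data. I would avoid this by working directly with $\Gamma'$. The component $f_k$ itself is invariant under $\Gamma'$. For any $\gamma\in\mathrm{SL}_2(\ZZ)$, the function $f_k\circ\gamma$ is invariant under $\gamma^{-1}\Gamma'\gamma$, which is again of finite index and contains some translation $T^h$. The statement that $\mathbf f$ is meromorphic at all cusps means every component of $\mathbf f\circ\gamma$ has at most exponential growth in the width-$h$ parameter. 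In weight zero, any coordinate convention must control $f_k\circ\gamma$, so $f_k\circ\gamma$ has a finite Laurent tail in $e^{2\pi i\tau/h}$.

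Taking $\gamma=S$ and restricting to $\tau=i\varepsilon/(2\pi)$ then gives the expansion $A_0e^{\lambda/\varepsilon}(1+O(e^{-\delta/\varepsilon}))$ used in Lemma~\ref{lem:cusp}. From there, Lemma~\ref{lem:cusp}, Proposition~\ref{prop:first} and Lemma~\ref{lem:second} apply verbatim and produce the contradiction.

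The main obstacle is this bookkeeping about what meromorphy at cusps means for a vector-valued function, specifically ensuring it yields a finite-width expansion of the single component at the cusp $0$. The rest is a direct citation of the theorem. I would handle the obstacle by the kernel-of-$\varrho$ reduction first, since on $\Gamma'$ the vector statement is componentwise.
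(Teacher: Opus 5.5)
Your proposal is correct and follows the same route as the paper. Both pass to the finite-index kernel of the multiplier, where each component is a scalar weight-zero modular function, and then invoke Theorem~\ref{thm:individual}. Your extra bookkeeping about meromorphy of a single component at the cusp $0$ spells out what the paper leaves implicit in Lemma~\ref{lem:cusp}; it does not change the argument.
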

\begin{proof}
The kernel of a finite-image representation has finite index. Each
component is a scalar modular function on this kernel, so
Theorem~\ref{thm:individual} applies.
\end{proof}

The corollary holds regardless of vector dimension. The following
argument removes the finite-image hypothesis when only a constant
Fricke transformation is requested.

\subsection{A direct obstruction to finite Fricke closure}
Let $\mathcal M_\infty$ denote the holomorphic functions on $\HH$ which,
for sufficiently large $\operatorname{Im}\tau$, have a convergent
expansion $\sum_{n\geq n_0}a_n e^{2\pi i n\tau/h_\infty}$ for some positive
integer $h_\infty$ and integer $n_0$. Thus meromorphy is required in a
finite-width Fourier parameter, not merely along the imaginary axis.
Set $W_N\tau=-1/(N\tau)$ for $N>0$.

\begin{lemma}\label{lem:fricke-radial}
Suppose that $g$ is a nonzero holomorphic function on $\HH$,
$g\circ W_N\in\mathcal M_\infty$, and
\[
 g\left(\frac{i\varepsilon}{2\pi}\right)
 =\kappa e^{\lambda/\varepsilon}
 (1+d_1\varepsilon+d_2\varepsilon^2+O(\varepsilon^3)),
 \qquad \kappa\ne0,
\]
with real $\lambda$. Then $d_1=d_2=0$.
\end{lemma}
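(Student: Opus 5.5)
The plan is to write $g = G\circ W_N$ with $G=g\circ W_N\in\mathcal M_\infty$. This works because $W_N$ is an involution, so $g(\tau)=G(W_N\tau)$. Along the imaginary axis $\tau=i\varepsilon/(2\pi)$ we have
\[
 W_N\tau=-\frac1{N\tau}=\frac{2\pi i}{N\varepsilon},
\]
so $\operatorname{Im}W_N\tau=2\pi/(N\varepsilon)\to\infty$ as $\varepsilon\downarrow0$. Hence for small $\varepsilon$ the convergent expansion of $G$ applies:
\[
 g\left(\frac{i\varepsilon}{2\pi}\right)=\sum_{n\ge n_0}a_n\exp\left(-\frac{4\pi^2 n}{h_\infty N\varepsilon}\right).
\]
This is a power series in the small real variable $Z=e^{-4\pi^2/(h_\infty N\varepsilon)}$, times $Z^{n_0}$.

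Next I use $g\ne0$ to isolate the leading term. Since $g\neq 0$ and $W_N$ is a biholomorphism, $G\not\equiv0$, so some $a_n\ne0$. Let $n_1$ be the least index with $a_{n_1}\ne0$. Because the Fourier series converges for $\operatorname{Im}\tau$ large, the tail is dominated by its first term:
\[
 \sum_{n>n_1}|a_n|Z^{n-n_1}=O(Z),
\]
uniformly as $Z\to0$. Therefore
\[
 g\left(\frac{i\varepsilon}{2\pi}\right)=a_{n_1}e^{\mu/\varepsilon}\bigl(1+O(e^{-\delta/\varepsilon})\bigr),
 \qquad \mu=-\frac{4\pi^2n_1}{h_\infty N},\quad \delta=\frac{4\pi^2}{h_\infty N}>0.
\]
This is the same shape as in the proof of Lemma~\ref{lem:cusp}, now obtained without any subgroup hypothesis.

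Finally I compare with the assumed expansion. Dividing the two expressions gives
\[
 \frac{\kappa}{a_{n_1}}\,e^{(\lambda-\mu)/\varepsilon}\bigl(1+d_1\varepsilon+d_2\varepsilon^2+O(\varepsilon^3)\bigr)=1+O(e^{-\delta/\varepsilon}).
\]
The right side tends to $1$. On the left the bracket tends to $1$ and $\lambda-\mu$ is real, so the exponential factor must have a finite nonzero limit. This forces $\lambda=\mu$, and then $\kappa=a_{n_1}$. What remains is
\[
 1+d_1\varepsilon+d_2\varepsilon^2+O(\varepsilon^3)=1+O(e^{-\delta/\varepsilon}).
\]
Since $e^{-\delta/\varepsilon}=o(\varepsilon^k)$ for every $k$, uniqueness of asymptotic power-series coefficients gives $d_1=0$ and then $d_2=0$.

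The main obstacle is the elimination of the exponential mismatch in that last comparison, namely the real factor $e^{(\lambda-\mu)/\varepsilon}$ and the constant $\kappa/a_{n_1}$. I handle it by taking limits as $\varepsilon\downarrow0$: if $\lambda>\mu$ the left side diverges, and if $\lambda<\mu$ it tends to $0$, both contradicting the right side. The reality of $\lambda$ is used exactly here, since $\mu$ is real. The uniform tail bound also needs care, and it comes from absolute convergence of the Fourier series at one fixed height, together with monotone decrease of $Z$ as $\varepsilon\downarrow0$.
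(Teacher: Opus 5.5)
Your proposal is correct and follows essentially the same route as the paper: write $g=(g\circ W_N)\circ W_N$, evaluate the Fourier expansion of $g\circ W_N$ at $W_N(i\varepsilon/2\pi)=2\pi i/(N\varepsilon)$, isolate its first nonzero term to get $g = a_{n_*}e^{-4\pi^2n_*/(Nh_\infty\varepsilon)}\bigl(1+O(e^{-4\pi^2/(Nh_\infty\varepsilon)})\bigr)$, and then compare the exponent, the amplitude, and the $\varepsilon$ and $\varepsilon^2$ coefficients. You also supply the details the paper leaves implicit: why $g\circ W_N\not\equiv0$, the uniform tail bound, and the limit argument forcing $\lambda=\mu$.
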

\begin{proof}
Write $h=g\circ W_N$ and let $a_{n_*}e^{2\pi i n_*\tau/h_\infty}$
be its first nonzero Fourier term. Since $W_N^2=1$,
\[
 g\left(\frac{i\varepsilon}{2\pi}\right)
 =a_{n_*}e^{-4\pi^2 n_*/(Nh_\infty\varepsilon)}
 \left(1+O(e^{-4\pi^2/(Nh_\infty\varepsilon)})\right).
\]
Comparison of the leading exponent and amplitude, followed by the
coefficients of $\varepsilon$ and $\varepsilon^2$, proves the claim.
\end{proof}

\begin{corollary}[Fricke exclusion]\label{cor:fricke}
For every $a,b,c\in\QQ$ and $N>0$, the Fricke image of
$q^cF_{a,b}$ does not belong to $\mathcal M_\infty$.
Consequently, no finite-dimensional subspace of $\mathcal M_\infty$
preserved by $W_N$ contains $q^cF_{a,b}$. Equivalently, no finite vector
with components in $\mathcal M_\infty$ and with one such component
satisfies a constant-matrix law
\[
 \mathbf G(W_N\tau)=S\mathbf G(\tau).
\]
\end{corollary}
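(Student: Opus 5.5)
The plan is to combine the radial expansion \eqref{eq:expansion} with Lemma~\ref{lem:fricke-radial} and the explicit coefficients from Proposition~\ref{prop:first} and Lemma~\ref{lem:second}. Fix $a,b,c\in\QQ$ and $N>0$, and put $g=q^cF_{a,b}$. This is a nonzero holomorphic function on $\HH$, since the series converges locally uniformly and has constant term one. Along $\tau=i\varepsilon/(2\pi)$ we have $q=e^{-\varepsilon}$, so \eqref{eq:expansion} multiplied by $e^{-c\varepsilon}$ gives
\[
 g\left(\frac{i\varepsilon}{2\pi}\right)=\kappa e^{\Lambda/\varepsilon}
 \bigl(1+(\beta_1-c)\varepsilon+(\beta_2-c\beta_1+c^2/2)\varepsilon^2+O(\varepsilon^3)\bigr).
\]
Here $\kappa>0$ and $\Lambda$ is real. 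Suppose, for contradiction, that $g\circ W_N\in\mathcal M_\infty$. Lemma~\ref{lem:fricke-radial} then forces both displayed coefficients to vanish. This reproduces the two conditions \eqref{eq:necessary} of Lemma~\ref{lem:cusp}, but without any finite-index group or multiplier hypothesis.

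From the first condition, $\beta_1(a,b)=c$ is rational. Proposition~\ref{prop:first} then gives $(a,b)=(19/3,124/3)$ and $c=8806/3249$. Substituting $c=\beta_1$ into the second condition leaves $\beta_2-\tfrac12\beta_1^2=0$. Lemma~\ref{lem:second} shows this quantity equals $-1653155/3518667\neq0$, which is the contradiction. So the Fricke image of $q^cF_{a,b}$ is not in $\mathcal M_\infty$. This proves the first assertion.

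For the subspace statement, suppose $V\subset\mathcal M_\infty$ is finite-dimensional, preserved by $W_N$ (acting by $f\mapsto f\circ W_N$), and contains $g$. Then $g\circ W_N\in V\subset\mathcal M_\infty$, contradicting the first part.

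For the vector statement, suppose $\mathbf G$ has components in $\mathcal M_\infty$, one component equals $g$, and $\mathbf G(W_N\tau)=S\mathbf G(\tau)$ with $S$ constant. The component of $\mathbf G\circ W_N$ corresponding to $g$ is then a finite linear combination of components of $\mathbf G$. We still need to check that $\mathcal M_\infty$ is closed under finite linear combinations. Take a common width equal to the least common multiple of the widths. Then each expansion rewrites as a series in the finer parameter $e^{2\pi i\tau/h}$ with bounded-below integer exponents, and the sum converges for large $\operatorname{Im}\tau$. Hence $g\circ W_N\in\mathcal M_\infty$, again a contradiction. The converse direction of the claimed equivalence is immediate: the span of the components of such a $\mathbf G$ is a finite-dimensional $W_N$-stable subspace.

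The main point needing care is the applicability of Lemma~\ref{lem:fricke-radial}. The expansion \eqref{eq:expansion} must be a genuine asymptotic expansion along the imaginary axis, which is supplied by \cite[Theorem~2.1]{Mizuno}. The leading amplitude $\kappa$ must also be nonzero, which holds since $\kappa>0$. The rest is bookkeeping with the earlier exact computations.
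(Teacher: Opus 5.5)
Your proposal is correct and follows the paper's proof. You apply Lemma~\ref{lem:fricke-radial} to $g=q^cF_{a,b}$ via \eqref{eq:expansion} to get $\beta_1=c$ and $\beta_2-\tfrac12\beta_1^2=0$, which Proposition~\ref{prop:first} and Lemma~\ref{lem:second} make incompatible, and then note that a preserved subspace or constant-matrix law would force $g\circ W_N\in\mathcal M_\infty$; your common-width check that $\mathcal M_\infty$ is closed under finite linear combinations justifies a step the paper only asserts.
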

\begin{proof}
For $g=q^cF_{a,b}$, Lemma~\ref{lem:fricke-radial} and
\eqref{eq:expansion} require $\beta_1=c\in\QQ$ and
$\beta_2-\beta_1^2/2=0$. Proposition~\ref{prop:first} and
Lemma~\ref{lem:second} show that these requirements are incompatible.
A preserved space or the displayed matrix law would make
$g\circ W_N$ a finite linear combination of members of
$\mathcal M_\infty$, which again belongs to $\mathcal M_\infty$.
\end{proof}

The dimension and the entries of $S$ are unrestricted. The cusp
assumption is essential: pairing any holomorphic $g$ with
$g\circ W_N$ gives a formal two-component exchange law, but its
components need not be meromorphic at infinity. The corollary concerns
individual sums. It does not obstruct the Fricke-invariant theta
target of Proposition~\ref{prop:Fricke} or establish the failure of
Fricke symmetry for the positive combination.

\section{A positive combination surviving the asymptotic tests}
\label{sec:combination}
\subsection*{Why this matrix is worth testing}
Positive definiteness does not single out $31$. For example,
\[
 B_m=\begin{pmatrix}6&m\\m&6m\end{pmatrix},\qquad
 \det B_m=m(36-m),
\]
is positive definite for every integer $1\leq m\leq35$.
The extra input for $m=31$ is the two-term identity
$31L(\rho)+L(\sigma)=4\pi^2$, proved in Appendix~\ref{app:dilog}.
It evaluates the common saddle action as $\Lambda=4\pi^2/93$,
so the leading growth has the rational dilogarithmic value required
by a modular cusp expansion. The following observation explains why
this test precedes a search over positive combinations.

\begin{lemma}[Persistence of the leading action]\label{lem:persistence}
Let finitely many functions satisfy
$G_j(e^{-\varepsilon})=\kappa_j e^{\Lambda/\varepsilon}
(1+O(\varepsilon))$ with $\kappa_j>0$ and real $\Lambda$.
If $\lambda_j>0$ and $t_j\in\QQ$, then
\[
 \sum_j\lambda_jq^{t_j}G_j(q)
 =\left(\sum_j\lambda_j\kappa_j\right)
 e^{\Lambda/\varepsilon}(1+O(\varepsilon)),\qquad q=e^{-\varepsilon}.
\]
If this combination is a weight-zero modular function on a finite-index
subgroup with finite-order multiplier, meromorphic at the cusps, then
$\Lambda/\pi^2\in\QQ$.
\end{lemma}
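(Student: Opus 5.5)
The plan has two parts: an elementary asymptotic computation for the combination, then an appeal to the cusp argument of Lemma~\ref{lem:cusp} to read off rationality of $\Lambda/\pi^2$.

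\emph{Step 1: the combination's asymptotics.} Write $q^{t_j}=e^{-t_j\varepsilon}=1+O(\varepsilon)$. Then each term satisfies
\[
 \lambda_jq^{t_j}G_j(e^{-\varepsilon})=\lambda_j\kappa_je^{\Lambda/\varepsilon}(1+O(\varepsilon)).
\]
Summing finitely many such terms, the error terms combine into $O(\varepsilon)e^{\Lambda/\varepsilon}\sum_j\lambda_j\kappa_j$, because all weights are positive. The key point is that $\sum_j\lambda_j\kappa_j>0$, so no cancellation can occur in the leading coefficient. Factoring it out gives the displayed formula.

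\emph{Step 2: reading off $\Lambda$ from modularity.} Call the combination $g$ and suppose it is modular as stated. As in the proof of Lemma~\ref{lem:cusp}, I would pass to the kernel $\Gamma'$ of the multiplier, which has finite index. The cusp $0$ of $\Gamma'$ has some finite width $h$. Let $\gamma\in\mathrm{SL}_2(\ZZ)$ send $\infty$ to $0$, for instance $\tau\mapsto-1/\tau$. Then $g\circ\gamma$ has a Laurent expansion
\[
 \sum_{n\ge n_*}a_ne^{2\pi in\tau'/h},\qquad a_{n_*}\neq0.
\]
Along $\tau=i\varepsilon/(2\pi)$ we have $\tau'=-1/\tau=2\pi i/\varepsilon$, so
\[
 g(i\varepsilon/2\pi)=a_{n_*}e^{-4\pi^2n_*/(h\varepsilon)}\bigl(1+O(e^{-4\pi^2/(h\varepsilon)})\bigr).
\]

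\emph{Step 3: comparison.} Take the logarithm of the ratio of the two expansions as $\varepsilon\downarrow0$. The expression $(\Lambda+4\pi^2n_*/h)/\varepsilon$ must stay bounded, since the remaining factor tends to the nonzero constant $a_{n_*}/\sum_j\lambda_j\kappa_j$. This forces
\[
 \Lambda=-\frac{4\pi^2n_*}{h},
\]
so $\Lambda/\pi^2=-4n_*/h\in\QQ$.

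The only step needing care is Step 2. Two things must be checked there. First, the chosen cusp representative must give the full parameter $e^{2\pi i\tau'/h}$ with a real exponent, so the leading exponential is real and can be compared with $e^{\Lambda/\varepsilon}$. Second, a weight-zero modular function picks up no automorphy factor under $\gamma$, so no power of $\varepsilon$ appears in the leading term. Both are immediate: $\gamma$ lies in $\mathrm{SL}_2(\ZZ)$, and the weight is zero.
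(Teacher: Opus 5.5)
Your proposal is correct and follows essentially the same route as the paper: expand $q^{t_j}=1+O(\varepsilon)$ and use positivity of the leading amplitude, then pass to the multiplier kernel and compare with the first Fourier term $\exp(-4\pi^2n_*/(h\varepsilon))$ at the cusp $0$ to obtain $\Lambda=-4\pi^2n_*/h$. Your version spells out details that the paper's three-line proof leaves implicit: the explicit map $\tau\mapsto-1/\tau$, the nonvanishing of $a_{n_*}$, and the ratio argument forcing the exponents to match.
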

\begin{proof}
Each factor $q^{t_j}=1+O(\varepsilon)$ and the displayed amplitude
is positive. At the cusp $0$, after passing to the multiplier kernel,
a first Fourier term of order $n_*$ and width $h_\infty$ contributes
$\exp(-4\pi^2n_*/(h_\infty\varepsilon))$.
Comparison gives $\Lambda=-4\pi^2n_*/h_\infty$.
\end{proof}

The same conclusion holds for signed coefficients when the sum of the
weighted leading amplitudes is nonzero. Thus shifts can cancel higher
radial corrections without changing this leading action. The
dilogarithm identity motivates the search at $31$; it is not needed
for the contiguous rearrangement or for calculating the correction
equations below. Its rational value is only a necessary test, not a
sufficient modularity criterion. Within the specified cubic source,
the matrix restrictions already reduce the choices to $3,13,31$.

\subsection{Contiguous identities and positivity}
The following identity holds before any modular evaluation is assumed.
\begin{proposition}\label{prop:positive}
The signed expression \eqref{eq:signed} equals the positive expression
\eqref{eq:positive}. In particular, $\Ssum\in\ZZ_{\geq0}[[q]]$.
\end{proposition}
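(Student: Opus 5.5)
The plan is to derive both sides from a single first-order contiguous relation in the variable $r$, obtained by splitting off the factor $1-q^r$ from the denominator $(q;q)_r$. Write $Q(r,s)=3r^2+31rs+93s^2$. For any rational $a,b$, the difference $F_{a,b}-F_{a+1,b}$ has summands
\[
 \frac{q^{Q(r,s)+ar+bs}(1-q^r)}{(q;q)_r(q^{31};q^{31})_s}
 =\frac{q^{Q(r,s)+ar+bs}}{(q;q)_{r-1}(q^{31};q^{31})_s}.
\]
The $r=0$ terms vanish, so we may shift $r\mapsto r+1$. Since $Q(r+1,s)=Q(r,s)+6r+31s+3$, this yields
\begin{equation}\label{eq:contig-r-plan}
 F_{a,b}=F_{a+1,b}+q^{a+3}F_{a+6,b+31}.
\end{equation}
Every series involved converges absolutely and locally uniformly on $\HH$, by positive definiteness as noted after \eqref{eq:matrices}. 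Hence the termwise subtraction and reindexing are legitimate as identities of analytic functions, and also as formal power series.

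Next we apply \eqref{eq:contig-r-plan} repeatedly to $F_{3,31}$. Two steps give
\[
 F_{3,31}=F_{4,31}+q^{6}F_{9,62}
 =F_{5,31}+q^{7}F_{10,62}+q^{6}F_{9,62}.
\]
Multiplying by $q^2$ gives $q^2F_{3,31}=q^2F_{5,31}+q^8F_{9,62}+q^9F_{10,62}$. One further application, $F_{10,62}=F_{11,62}+q^{13}F_{16,93}$, gives $q^9F_{10,62}=q^9F_{11,62}+q^{22}F_{16,93}$. The last term exactly cancels $-q^{22}F_{16,93}$ in \eqref{eq:signed}. Adding $F_{0,0}$ to both sides then turns \eqref{eq:signed} into \eqref{eq:positive}. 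The only real check is the exponent bookkeeping: the prefactors $q^{a+3}$ are $q^6$, $q^7$ and $q^{13}$ at $a=3,4,10$, and the linear-term shifts are $(a,b)\mapsto(a+6,b+31)$.

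For the positivity claim, every exponent $Q(r,s)+ar+bs+c$ occurring in \eqref{eq:positive} is a nonnegative integer, because $Q$ has integer coefficients and the shifts $(0,0),(5,31),(9,62),(11,62)$ and prefactor exponents $0,2,8,9$ are nonnegative integers. Each reciprocal $1/(q;q)_r$ and $1/(q^{31};q^{31})_s$ expands with nonnegative integer coefficients. Thus each of the four series lies in $\ZZ_{\ge0}[[q]]$, and so does their sum; this is the integral case of the converse part of Proposition~\ref{prop:support}.

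I do not expect a genuine obstacle here. The only point needing care is locating the cancellation path. A naive use of the $s$-direction relation $F_{a,b}=F_{a,b+31}+q^{b+93}F_{a+31,b+186}$ produces terms that do not match. The $r$-direction relation alone closes after three steps, precisely because the second shift $(a,b)\mapsto(a+6,b+31)$ carries $(10,62)$ to $(16,93)$.
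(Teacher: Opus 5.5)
Your proof is correct and follows essentially the same route as the paper: the contiguous relation $F_{a,b}-F_{a+1,b}=q^{a+3}F_{a+6,b+31}$, obtained by cancelling $1-q^r$, is applied at $a=3,4,10$ to turn $q^2F_{3,31}-q^{22}F_{16,93}$ into $q^2F_{5,31}+q^8F_{9,62}+q^9F_{11,62}$. Your explicit positivity argument (nonnegative integer exponents, and reciprocal Pochhammer symbols with nonnegative integer coefficients) supplies the ``in particular'' step that the paper leaves implicit.
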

\begin{proof}
Canceling $1-q^r$ and replacing $r$ by $r+1$ gives
\begin{equation}\label{eq:contig}
 F_{a,b}-F_{a+1,b}=q^{a+3}F_{a+6,b+31}.
\end{equation}
Successive applications give
\[
\begin{aligned}
 q^2F_{3,31}-q^{22}F_{16,93}
 &=q^2F_{3,31}-q^9F_{10,62}+q^9F_{11,62}\\
 &=q^2F_{3,31}-q^2F_{4,31}+q^2F_{5,31}+q^9F_{11,62}\\
 &=q^8F_{9,62}+q^2F_{5,31}+q^9F_{11,62}.
\end{aligned}
\]
Adding $F_{0,0}$ proves the assertion.
\end{proof}
The rearrangement is not specific to $31$: if the quadratic exponent
is $3r^2+mrs+3ms^2$, the same calculation gives
$F^{(m)}_{a,b}-F^{(m)}_{a+1,b}=q^{a+3}F^{(m)}_{a+6,b+m}$.
Hence the signed-to-positive identity holds for every positive integer
$m$, with all multiples of $31$ replaced by the corresponding
multiples of $m$. It is the proposed modular evaluation, rather than
this contiguous relation, that distinguishes the candidate.
Equivalently, Conjecture~\ref{conj:main} proposes the single displayed sum
\begin{equation}\label{eq:positive-numerator}
 U(1,31)\stackrel?=
 \sum_{r,s\geq0}\frac{q^{3r^2+31rs+93s^2}
 \bigl(1+q^{5r+31s+2}+q^{9r+62s+8}+q^{11r+62s+9}\bigr)}
 {(q;q)_r(q^{31};q^{31})_s}.
\end{equation}
Each term of this numerator has coefficient one. Thus signed
cancellation in \eqref{eq:signed} is compatible with a manifestly
positive representation, as in the modulus-five comparison
\eqref{eq:mod5positive}.

\subsection{Necessary conditions for a combination}
For $b=31k$, the leading amplitude in \eqref{eq:expansion} is
$\kappa_0\rho^a\sigma^k$, where $\kappa_0$ is independent of $a,k$.
Consider $\sum_jc_jq^{t_j}F_{a_j,31k_j}$ and put
\[
 \omega_j=\rho^{a_j}\sigma^{k_j},\qquad
 \beta_{r,j}=\beta_r(a_j,31k_j),\qquad \beta_{0,j}=1.
\]
Assume its leading amplitude is nonzero. A common rational
$q$-normalization may be incorporated into the $t_j$. The first two
necessary weight-zero modularity conditions are then
\begin{align}
 \sum_jc_j\omega_j(\beta_{1,j}-t_j)&=0,\label{eq:test1}\\
 \sum_jc_j\omega_j(\beta_{2,j}-t_j\beta_{1,j}+t_j^2/2)&=0.
 \label{eq:test2}
\end{align}
Unlike the individual test, these equations allow the irrational parts
of different constituents to cancel. They explain the logical gap
between Theorem~\ref{thm:individual} and the combination problem.

For three shifts with linearly independent amplitudes in
$K=\QQ(\rho)$, let $M_0,M_1,M_2$ be the rational matrices with columns
the coordinates of $\omega_j,\omega_j\beta_{1,j},\omega_j\beta_{2,j}$ in the basis
$1,\rho,\rho^2$. Write $\mathsf R_1=M_0^{-1}M_1$ and $\mathsf R_2=M_0^{-1}M_2$.
If all $c_j$ are nonzero, \eqref{eq:test1} gives
$t_j=(\mathsf R_1c)_j/c_j$. Substitution into \eqref{eq:test2} yields
\begin{equation}\label{eq:search-quadrics}
 2c_j\bigl((\mathsf R_2-\mathsf R_1^{\,2})c\bigr)_j
       +(\mathsf R_1c)_j^2=0,\qquad j=1,2,3.
\end{equation}
Indeed, in coordinates the second equation is
$M_2c-M_1\diag(t)c+\tfrac12M_0\diag(t)^2c=0$; multiplication by
$M_0^{-1}$ and use of $\diag(t)c=\mathsf R_1c$ gives
\eqref{eq:search-quadrics}.

For the shifts $(a,k)=(0,0),(3,1),(16,3)$, exact elimination in the
chart $c=(1,u,v)$ gives the reduced Gr\"obner basis $u-1,v+1$.
The corresponding normalizations are
\begin{equation}\label{eq:search-solution}
 c=(1,1,-1),\qquad t=(-1/3,5/3,65/3).
\end{equation}
This recovers $q^{-1/3}\Ssum(q)$ from the asymptotic equations, before
the theta target is imposed.

\subsection{Scope of the finite search}
The positive searches used the $105$ shifts
$0\leq a\leq20$, $b=31k$, $0\leq k\leq4$.
The first two exact tests excluded all $5,460$ distinct pairs and
$187,460$ distinct triples with equal positive coefficients.
Allowing primitive positive coefficient tuples with entries in
$\{1,2,3\}$ gave $38,220$ weighted pairs and $4,686,500$ weighted
triples, with no survivor. These tests allow arbitrary rational $t_j$;
the bounds concern the linear shifts and the specified coefficient sets.
A separate elimination excluded all nonzero rational coefficient ratios
for pairs in this box. These are finite computational findings.
The first screen uses the prime $1000000007$:
a nonzero reduced obstruction certifies a nonzero rational obstruction
when its denominators are invertible. In these positive searches,
singular amplitude matrices and every unrejected case are passed to
exact rational elimination; they
are not discarded by the finite-field screen.

For the signed search, order the
shifts lexicographically and test the sign patterns $(1,1,-1)$,
$(1,-1,1)$, and $(1,-1,-1)$. Of the $187,460$ triples in this box,
$187,157$ have independent amplitudes and $303$ have dependent
amplitudes. For each full-rank triple and sign pattern, the first
condition fixes the rational $t_j$; the second condition then gives
$561,471$ tests. Exactly one survives the finite-field screen and
exact follow-up: $(a,k)=(0,0),(3,1),(16,3)$ with $c=(1,1,-1)$ and
\eqref{eq:search-solution}. These counts refer to a fresh reconstruction of the signed search;
they are not historical counts for the initial discovery.

The arbitrary-coefficient Gr\"obner elimination
\eqref{eq:search-quadrics} was completed for exactly one full-rank
triple, namely this survivor; it gives $u-1,v+1$ as stated above.
The sign-pattern search is not an arbitrary-rational-coefficient
classification, and its $303$ dependent triples are not classified.
Accordingly, uniqueness is asserted only within the stated full-rank
sign-pattern search. Neither global uniqueness nor minimality of four
positive constituents is asserted.

\section{Exact evidence for Conjecture~\ref{conj:main}}\label{sec:evidence}
The conjecture tested in this section is the level-$31$ sum--theta
identity in Conjecture~\ref{conj:main}:
\[
 \Ssum(q)=F_{0,0}(q)+q^2F_{3,31}(q)-q^{22}F_{16,93}(q)=U(1,31).
\]
Equivalently, the positive combination in \eqref{eq:positive} is
conjectured to equal $U(1,31)$. We examine its coefficients and radial
asymptotics below.

\subsection{Coefficient agreement}
Exact integer arithmetic gives
\begin{equation}\label{eq:finitecheck}
 \Ssum(q)-\frac{f_{31}(\tau)}{qJ_1J_{31}}=O(q^{2001}).
\end{equation}
Both the signed and positive expressions were computed independently
and agreed through this bound. The first terms are
\[
\begin{split}
 \Ssum(q)={}&1+q^2+q^3+q^4+q^5+q^6+q^7\\
 &+2q^8+2q^9+2q^{10}+2q^{11}+3q^{12}+3q^{13}+4q^{14}+\cdots.
\end{split}
\]
The comparison can be reproduced using only finite coefficient
recurrences. For $d=1,31$, the coefficients of $(q^d;q^d)_n^{-1}$
are generated by successively multiplying by $(1-q^{dj})^{-1}$.
For the sum side only pairs with
$3r^2+31rs+93s^2+ar+bs$ below the truncation bound contribute.
After multiplication by $qJ_1J_{31}$, the coefficients are compared
with the finite enumeration of
$u^2+uv+8v^2$ and $2u^2+uv+4v^2$.
Both forms are at least $(u^2+v^2)/2$, so a finite square of
explicit radius suffices. There is no numerical tolerance in this check.

\subsection{The exact leading amplitude}
\begin{proposition}\label{prop:amplitude}
The normalized combination has leading radial amplitude one:
\begin{equation}\label{eq:leading}
 e^{\varepsilon/3}\Ssum(e^{-\varepsilon})
 \sim e^{4\pi^2/(93\varepsilon)}
 \qquad(\varepsilon\downarrow0).
\end{equation}
\end{proposition}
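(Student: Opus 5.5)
We want $e^{\varepsilon/3}\Ssum(e^{-\varepsilon})\sim e^{4\pi^2/(93\varepsilon)}$. The plan works directly from the saddle expansion \eqref{eq:expansion} for each of the three constituents of \eqref{eq:signed}, or equivalently the four positive constituents of \eqref{eq:positive}. Every $F_{a,b}$ has the same exponential rate $\Lambda$, and \eqref{eq:dilog-value} gives $\Lambda=4\pi^2/93$. The factors $q^{t}$ and $e^{\varepsilon/3}$ are $1+O(\varepsilon)$. So, by Lemma~\ref{lem:persistence} (in its positive form, using \eqref{eq:positive} so that no cancellation of amplitudes can occur), the left side of \eqref{eq:leading} is
\[
\Bigl(\sum_j\kappa_j\Bigr)e^{\Lambda/\varepsilon}(1+O(\varepsilon)).
\]
It therefore remains only to prove that the total leading amplitude equals $1$.

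For $b=31k$ the amplitude is $\kappa_{a,k}=\kappa_0\rho^a\sigma^k$, with
\[
\kappa_0=\frac{\prod_i(1-z_i)^{-1/2}}{\sqrt{31\det\mathsf H}}.
\]
This follows from $h(t^{(0)})$ in \eqref{eq:amplitude}, since $e^{-b_it_i/d_i}=z_i^{b_i/d_i}$. The signed combination then has total amplitude
\[
\kappa_0\bigl(1+\rho^3\sigma-\rho^{16}\sigma^3\bigr).
\]
The positive form gives $\kappa_0(1+\rho^5\sigma+\rho^9\sigma^2+\rho^{11}\sigma^2)$, and the two agree by the Nahm equations \eqref{eq:nahm}, consistent with \eqref{eq:contig}. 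I would reduce $\omega=1+\rho^3\sigma-\rho^{16}\sigma^3$ in $K$, substituting $\sigma=(1-\rho)/\rho^6$ and then using \eqref{eq:reduction}. Likewise I would reduce $\det\mathsf H$ in $K$ using \eqref{eq:wexplicit}, together with $1-z_i=1/(1+w_i)$, so that $\prod(1-z_i)^{-1}=(1+w_1)(1+w_2)$. The target identity is the algebraic relation
\[
\omega^2(1+w_1)(1+w_2)=31\det\mathsf H
\]
in $K$. Both sides are explicit elements of $\QQ(\rho)$, so this is a finite check of three rational coordinates. Positivity of $\kappa_0$ and of $\omega$ (clear from the positive form) then fixes the square-root sign.

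The main obstacle is this exact algebraic identity. Nothing structural forces it a priori; it is the asymptotic shadow of the modular transformation of $f_{31}/(\eta(\tau)\eta(31\tau))$. Indeed, Proposition~\ref{prop:Fricke} predicts the cusp-$0$ behaviour $\Phi(i\varepsilon/2\pi)=\Phi(W_{31}\cdot)\sim e^{4\pi^2/(93\varepsilon)}$, with amplitude exactly $1$ because $\Phi=q^{-1/3}(1+\cdots)$ at infinity. That prediction serves as a consistency check of the normalization, including the factor $e^{\varepsilon/3}$ matching $t=-1/3$ in \eqref{eq:search-solution}, but it cannot replace the computation, since the conjecture is not assumed. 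If the direct reduction proves messy, I would first simplify $\det\mathsf H=\det(A+\diag(w_1,w_2))/31$. This makes the claim $\omega^2(1+w_1)(1+w_2)=\det(A+\diag(w))$, a norm-type identity that I expect to collapse after expressing everything as polynomials in $\rho$ of degree at most $2$. I would also keep track of the Euler--Maclaurin constant $\log(\varepsilon\sqrt{31}/(2\pi))$ from \eqref{eq:local-log}. It combines with the Gaussian integral $(2\pi/\varepsilon)\sqrt{\det\Sigma}$ and the lattice spacings $d_i\varepsilon$ to give precisely the stated $\kappa$ with no residual power of $\varepsilon$.
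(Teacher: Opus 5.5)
Your proposal is correct and is essentially the paper's proof: the paper also writes $\kappa_0^2=1/\bigl(31\det\mathsf H(1-\rho)(1-\sigma)\bigr)$, reduces the signed amplitude factor to $W=1+\rho^3\sigma-\rho^{16}\sigma^3=-55+85\rho-16\rho^2$, checks $W^2=31\det\mathsf H(1-\rho)(1-\sigma)$ in $K$ (equivalent to your form $W^2(1+w_1)(1+w_2)=\det(A+\diag(w_1,w_2))$), fixes the sign with the positive form, and takes $\Lambda=4\pi^2/93$ from Appendix~\ref{app:dilog}. The finite check you leave pending does close: $W^2=4465-6726\rho+1273\rho^2$ and $(1+w_1)(1+w_2)=3493+11475\rho-2718\rho^2$ multiply to $7372+24225\rho-5738\rho^2=(6+w_1)(6+w_2)-31$.
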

\begin{proof}
From \eqref{eq:amplitude}--\eqref{eq:HC},
\[
 \kappa_0^2=\frac1{31\det\mathsf H(1-\rho)(1-\sigma)}.
\]
The signed expression has leading amplitude factor
\[
 W=1+\rho^3\sigma-\rho^{16}\sigma^3=-55+85\rho-16\rho^2.
\]
Reduction by \eqref{eq:reduction} gives
\begin{equation}\label{eq:amplitude-certificate}
 W^2=31\det\mathsf H(1-\rho)(1-\sigma).
\end{equation}
The positive expression \eqref{eq:positive} also gives
$W=1+\rho^5\sigma+\rho^9\sigma^2+\rho^{11}\sigma^2>0$.
Thus $\kappa_0W=1$. Appendix~\ref{app:dilog} establishes the common
action $\Lambda=4\pi^2/93$, proving \eqref{eq:leading}.
\end{proof}
Independently, Proposition~\ref{prop:Fricke} and the leading term
$\Phi=q^{-1/3}(1+O(q))$ at infinity give
$\Phi(i\varepsilon/(2\pi))\sim e^{4\pi^2/(93\varepsilon)}$.
Hence both the action and the leading amplitude agree exactly.

\subsection{Six vanishing corrections}
The exact local Gaussian expansion gives, for
\[
 (a_j,b_j,t_j,c_j)=(0,0,-1/3,1),\quad
 (3,31,5/3,1),\quad(16,93,65/3,-1),
\]
the identities
\begin{equation}\label{eq:six}
 \sum_j c_j\rho^{a_j}\sigma^{b_j/31}
 \sum_{r=0}^n\frac{(-t_j)^{n-r}}{(n-r)!}\beta_r(a_j,b_j)=0,
 \qquad 1\leq n\leq6.
\end{equation}
All three rational coordinates in $K$ vanish at each order.
Together with Proposition~\ref{prop:amplitude}, this yields the checked
expansion
\[
 e^{-4\pi^2/(93\varepsilon)+\varepsilon/3}\Ssum(e^{-\varepsilon})
 =1+O(\varepsilon^7).
\]

For a finite specification of the calculation, extend
\eqref{eq:local-log} to
\[
 \varepsilon^{-1}\Psi(t)+\log h(t)+\varepsilon c_0(t)
 +\varepsilon^3c_3(t)+\varepsilon^5c_5(t)+O(\varepsilon^7),
\]
where
\[
 c_3(t)=\sum_i\frac{d_i^3}{720}\Li_{-2}(e^{-t_i}),\qquad
 c_5(t)=-\sum_i\frac{d_i^5}{30240}\Li_{-4}(e^{-t_i}).
\]
At $t=t^{(0)}+\sqrt\varepsilon\xi$, remove the leading Gaussian and
write the remaining exponent as $\sum_{j\geq1}L_j\varepsilon^{j/2}$.
Its exponential coefficients obey
\begin{equation}\label{eq:En}
 E_0=1,\qquad E_n=\frac1n\sum_{j=1}^n jL_jE_{n-j},\qquad
 \beta_n=\EE(E_{2n}).
\end{equation}
Expansion through order $\varepsilon^6$ uses derivatives of $\Psi$
through order fourteen, of $\log h$ through order twelve, and of
$c_0,c_3,c_5$ through orders ten, six, and two, respectively.
Together with the moment recurrence \eqref{eq:momentrecurrence},
these determine all entries of \eqref{eq:six}.
All derivatives are rational functions of $w_i$, since
$\partial_{t_i}=-w_i(1+w_i)\partial_{w_i}$.
For clarity, the resulting field coordinates are
\[
\begin{array}{c|ccc}
 n&[1]&[\rho]&[\rho^2]\\\hline
 0&-55&85&-16\\
 1,2,3,4,5,6&0&0&0
\end{array}
\]
before dividing by $W$. The same expansion procedure reproduced the
three proved modulus-nine controls through order four, and one through
order six. The finite vanishing statements in \eqref{eq:six} do not
establish the required vanishing at every order.

\section{A dual companion and conjectural Fricke symmetry}
\label{sec:dual-fricke}
The dual datum reveals a further proposed connection between the
level-$31$ sums and $T_{31A}$. We first define its components, then
state the Fricke law in full and explain its action without matrix
notation. The transformation of the sums remains conjectural; the
translation law and the matrix calculations below are unconditional.

\subsection{The dual sum and its three components}
The inverse of the matrix in \eqref{eq:matrices} is
\[
 A^{-1}=\frac15\begin{pmatrix}6&-1\\-31&6\end{pmatrix},
 \qquad \det A=5.
\]
With the same denominator steps, define
\begin{equation}\label{eq:dual-family}
 F^{\vee}_{\alpha,\beta}(q)=
 \sum_{r,s\geq0}
 \frac{q^{(3r^2-31rs+93s^2)/5+\alpha r+\beta s}}
 {(q;q)_r(q^{31};q^{31})_s},\qquad \alpha,\beta\in\QQ.
\end{equation}
Its quadratic form is $\tfrac12(r,s)A^{-1}D(r,s)^{\mathsf T}$.
It is positive definite, so these sums converge locally uniformly on
$\HH$. Fractional powers retain the convention
$q^c=e^{2\pi i c\tau}$. Consider the particular combination
\begin{equation}\label{eq:dual-combination}
 S^*(\tau)=q^{-1}F^{\vee}_{0,0}(q)
 +q^{12/5}F^{\vee}_{-13/5,\,93/5}(q)
 -q^{2/5}F^{\vee}_{3/5,\,62/5}(q).
\end{equation}
Its three linear-shift pairs are the images of
$(0,0),(3,31),(16,93)$ from \eqref{eq:signed} under
$(a,b)\mapsto((6a-b)/5,(-31a+6b)/5)$.

Every exponent in $S^*$ belongs to one of the classes
$0,2/5,3/5$ modulo $\ZZ$. Indeed, put $u=r+4s$. Five times
the complete numerator exponent in the three respective summands of
\eqref{eq:dual-combination} is congruent modulo $5$ to
$3u^2$, $3(u+2)^2$, and $3(u+3)^2$. Each takes values in
$\{0,2,3\}$, and expansion of the denominators adds only integer
powers of $q$. Thus there is a unique decomposition
\begin{equation}\label{eq:dual-splitting}
 S^*=S_0+S_2+S_3,\qquad S_j\in q^{j/5}\ZZ((q)).
\end{equation}
The subscript labels the fractional exponent class; it is neither a
power nor a derivative. The initial terms are
\begin{equation}\label{eq:dual-initial}
\begin{aligned}
 S_0&=q^{-1}+1+q+3q^2+4q^3+5q^4+\cdots,\\
 S_2&=2q^{7/5}+4q^{12/5}+4q^{17/5}+8q^{22/5}+\cdots,\\
 S_3&=2q^{-2/5}+2q^{3/5}+4q^{8/5}+4q^{13/5}+\cdots.
\end{aligned}
\end{equation}
In particular all three components are nonzero.
The number $31$ comes from the second denominator step, whereas $5$
enters through $\det A$ and the dual exponent. These distinct roles
make fifth-root constants natural in the dual problem, but do not
derive the proposed transformation.

\subsection{The Fricke conjecture, written out}
Write $W_{31}\tau=-1/(31\tau)$ and collect the components as
\[
 V(\tau)=(S_0(\tau),S_2(\tau),S_3(\tau))^{\mathsf T}.
\]

\begin{conjecture}[Fricke symmetry of the dual]\label{conj:dual-fricke}
For every $\tau\in\HH$,
\begin{equation}\label{eq:dual-fricke}
 \begin{pmatrix}
 S_0\!\left(-\dfrac1{31\tau}\right)\\[3pt]
 S_2\!\left(-\dfrac1{31\tau}\right)\\[3pt]
 S_3\!\left(-\dfrac1{31\tau}\right)
 \end{pmatrix}
 =\underbrace{\frac1{\sqrt5}
 \begin{pmatrix}
 1&1&1\\
 2&2\cos(2\pi/5)&2\cos(4\pi/5)\\
 2&2\cos(4\pi/5)&2\cos(2\pi/5)
 \end{pmatrix}}_{\displaystyle\mathbf M_{31}}
 \begin{pmatrix}S_0(\tau)\\S_2(\tau)\\S_3(\tau)\end{pmatrix}.
\end{equation}
\end{conjecture}

The three equivalent scalar statements make the action simpler:
\begin{equation}\label{eq:dual-exchange}
\begin{aligned}
 S_0(W_{31}\tau)&=S^*(\tau)/\sqrt5,\\
 S^*(W_{31}\tau)&=\sqrt5\,S_0(\tau),\\
 (S_2-S_3)(W_{31}\tau)&=(S_2-S_3)(\tau).
\end{aligned}
\end{equation}
The first is the first row of \eqref{eq:dual-fricke}; the other two
follow by summing all rows and subtracting the last two rows.
Conversely, these three statements determine each transformed
component. Thus Fricke would exchange $S_0$ and $S^*/\sqrt5$ and
fix $S_2-S_3$. The full sum $S^*$ is not asserted to be
Fricke-invariant. These are weight-zero laws, with no extra power of
$\tau$. Equivalently, the combinations
$S^*+\sqrt5S_0$, $S_2-S_3$, and $S^*-\sqrt5S_0$ would have
Fricke eigenvalues $+1,+1,-1$.

The law was tested at twelve points: $\tau=iy$ for
$y=0.14,0.16,\ldots,0.30$, and
$0.04+0.18i$, $-0.07+0.23i$, $0.12+0.19i$.
Using the dual expansion through $q^{520}$ at $90$-digit precision,
the largest observed componentwise residual
$|a-b|/\max(1,|a|,|b|)$ was below $1.6\times10^{-90}$.
These are floating-point residuals for truncated evaluations, not
certified bounds for the infinite-series error.

\subsection{Exact translation and icosahedral matrix algebra}
Unlike the Fricke law, translation follows directly from
\eqref{eq:dual-splitting}. With $\zeta_5=e^{2\pi i/5}$, one has
\begin{equation}\label{eq:dual-translation}
 V(\tau+1)=\mathbf R_5V(\tau),\qquad
 \mathbf R_5=\diag(1,\zeta_5^2,\zeta_5^3).
\end{equation}

\begin{proposition}\label{prop:dual-matrices}
The matrices in \eqref{eq:dual-fricke} and
\eqref{eq:dual-translation} satisfy
\begin{equation}\label{eq:dual-matrix-relations}
 \mathbf M_{31}^{\,2}=I,\qquad \mathbf R_5^{\,5}=I,\qquad
 (\mathbf M_{31}\mathbf R_5)^3=-I.
\end{equation}
They generate a group of order $120$, isomorphic to
$A_5\times\{\pm I\}$, and both preserve $x^2+yz$.
\end{proposition}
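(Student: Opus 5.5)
The plan is to verify the three relations by direct computation in $\QQ(\zeta_5)$, then identify the group abstractly through the von Dyck presentation of $A_5$, and finally check the invariant form.

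\textbf{Step 1: the relations.} Write $c_1=\cos(2\pi/5)=(\sqrt5-1)/4$ and $c_2=\cos(4\pi/5)=-(\sqrt5+1)/4$. These satisfy
\[
c_1+c_2=-\tfrac12,\qquad c_1c_2=-\tfrac14,\qquad c_1^2+c_2^2=\tfrac34,\qquad 2c_k=\zeta_5^k+\zeta_5^{-k}.
\]
\begin{itemize}
\item For $\mathbf M_{31}^2=I$, multiply out the $3\times3$ product and reduce each entry with these identities. The diagonal entries become $(1+2+2)/5=1$ and $(2+4c_1^2+4c_2^2)/5=1$. The off-diagonal entries become $(2+4c_1+4c_2)/5=0$ and $(2+8c_1c_2)/5=0$.
\item The relation $\mathbf R_5^5=I$ is immediate.
\item For $(\mathbf M_{31}\mathbf R_5)^3=-I$, there are two routes. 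One is to compute $X=\mathbf M_{31}\mathbf R_5$ explicitly in $\QQ(\zeta_5)$, form $X^2$, and check $X^3=-I$ entrywise. The cheaper route is spectral: show that $X$ satisfies $X^2-X+I=0$. Equivalently, $-X$ has order $3$, its eigenvalues are primitive cube roots of unity, and it is diagonalizable. For this it suffices to compute $\operatorname{tr}X$ and $\det X$, and to verify the single matrix identity $X^2-X+I=0$.
\end{itemize}
I expect this third relation to be the main computational obstacle. It is mechanical but error-prone. I would handle it by expressing every entry in the basis $1,\zeta_5,\zeta_5^2,\zeta_5^3$ and using $\zeta_5^4=-1-\zeta_5-\zeta_5^2-\zeta_5^3$.

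\textbf{Step 2: the group.} Put $a=-\mathbf M_{31}$ and $t=\mathbf R_5$. Then $a^2=I$ and $t^5=I$. Also $(at)^3=-(\mathbf M_{31}\mathbf R_5)^3=I$, because the three sign changes give $(-1)^3=-1$. So $a,t$ satisfy the presentation $\langle a,t\mid a^2=t^5=(at)^3=1\rangle$ of $A_5$. Hence $H=\langle a,t\rangle$ is a quotient of $A_5$. Since $A_5$ is simple and $t\ne I$, we get $H\cong A_5$, of order $60$.

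Because $A_5$ is perfect, every element of $H$ has determinant $1$. On the other hand $\det(-I)=-1$ in dimension three, so $-I\notin H$. Moreover $-I=(\mathbf M_{31}\mathbf R_5)^3$ lies in the group generated by $\mathbf M_{31}$ and $\mathbf R_5$, and this group equals $\langle H,-I\rangle$. Since $-I$ is central and meets $H$ trivially, the group is $H\times\{\pm I\}\cong A_5\times\{\pm I\}$, of order $120$.

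\textbf{Step 3: the invariant form.} Let
\[
J=\begin{pmatrix}1&0&0\\0&0&\tfrac12\\0&\tfrac12&0\end{pmatrix},
\]
the symmetric (bilinear, not Hermitian) matrix of $x^2+yz$.
\begin{itemize}
\item For $\mathbf R_5$, the substitution gives $x^2+\zeta_5^2y\,\zeta_5^3z=x^2+yz$, so the form is preserved.
\item For $\mathbf M_{31}$, check $\mathbf M_{31}^{\mathsf T}J\mathbf M_{31}=J$ entrywise, again reducing with the relations among $c_1,c_2$. For example, the $(1,1)$ entry is $(1+4)/5=1$. The $(2,3)$ entry is $(1+2c_1\cdot2c_2+\cdots)/5$, and it should reduce to $\tfrac12$ in the same way as in Step 1.
\end{itemize}
Since both generators preserve the form, the whole group does.
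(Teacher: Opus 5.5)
Your overall structure is the paper's: direct multiplication for the relations, the $(2,3,5)$ presentation applied to $-\mathbf M_{31}$ and $\mathbf R_5$ together with simplicity of $A_5$, a determinant argument showing $-I\notin H$, and direct substitution for $x^2+yz$. Steps~2 and~3 are sound. However, the shortcut you single out for the third relation would fail.

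\textbf{The shortcut for $(\mathbf M_{31}\mathbf R_5)^3=-I$.} The identity $X^2-X+I=0$ is false for $X=\mathbf M_{31}\mathbf R_5$.
\begin{itemize}
\item Since $\zeta_5^2+\zeta_5^3=2c_2$, you get $\operatorname{tr}X=(1+4c_1c_2)/\sqrt5=0$.
\item Next, $\operatorname{tr}\mathbf M_{31}=(1+4c_1)/\sqrt5=1$ and $\mathbf M_{31}^2=I$ force $\det\mathbf M_{31}=-1$, so $\det X=\det\mathbf M_{31}\det\mathbf R_5=-1$.
\item A $3\times3$ matrix annihilated by $\lambda^2-\lambda+1$ has all its eigenvalues in $\{e^{i\pi/3},e^{-i\pi/3}\}$. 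Its trace therefore has real part $3/2$, so no such identity can hold here.
\end{itemize}
In fact the spectrum of $X$ is $\{-1,e^{i\pi/3},e^{-i\pi/3}\}$. Thus $-X$ has order $3$ but has eigenvalue $1$, and your ``equivalent'' description of its eigenvalues as primitive cube roots of unity is also wrong.

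\textbf{How to repair it.} The spectral idea works once you use the full characteristic polynomial. The principal $2\times2$ minors of $\sqrt5X$ are $(2c_1-2)\zeta_5^2$, $(2c_1-2)\zeta_5^3$, and $4c_1^2-4c_2^2=-\sqrt5$. They sum to $4c_1c_2-4c_2-\sqrt5=0$. Hence $\chi_X(\lambda)=\lambda^3+1$, and Cayley--Hamilton gives $X^3=-I$. Alternatively, use the entrywise computation you listed first, which is what the paper does.

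\textbf{A smaller slip in Step~3.} The $(2,3)$ entry of $\mathbf M_{31}^{\mathsf T}J\mathbf M_{31}$ is $\frac15\bigl(1+\frac12(4c_1^2+4c_2^2)\bigr)=\frac12$. The product $2c_1\cdot 2c_2$ belongs to the $(2,2)$ entry, which is $\frac15(1+4c_1c_2)=0$. The check itself still succeeds.
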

\begin{proof}
Put $\phi=(1+\sqrt5)/2$. The two trigonometric entries of
$\sqrt5\mathbf M_{31}$ are $\phi-1$ and $-\phi$.
Direct multiplication, using $\phi^2=\phi+1$ and
$1+\zeta_5+\cdots+\zeta_5^4=0$, with
$\zeta_5+\zeta_5^{-1}=\phi-1$, proves
\eqref{eq:dual-matrix-relations}; it also gives
$\det\mathbf M_{31}=-1$. The matrices
$J=-\mathbf M_{31}$ and $\mathbf R_5$ therefore satisfy
$J^2=\mathbf R_5^5=(J\mathbf R_5)^3=I$.
The spherical $(2,3,5)$ presentation gives $A_5$; its nontrivial
image here is faithful because $A_5$ is simple.
Both generators have determinant one, whereas $-I$ does not.
Adjoining $-I=(\mathbf M_{31}\mathbf R_5)^3$ gives the stated
group of order $120$. Finally, direct substitution shows that
$x^2+yz$ is preserved by each matrix.
\end{proof}
This is the full icosahedral group in its three-dimensional
representation. The real subspace $x\in\RR$, $z=\overline y$
is preserved, and $x^2+yz=x^2+|y|^2$ is positive definite there.
These are proved statements about the explicit matrices. Their
relations alone do not prove Conjecture~\ref{conj:dual-fricke}, or
covariance of $V$ under every element of $\Gamma_0(31)^+$.

\subsection{A quadratic link to the Monster series}
The invariant quadratic expression suggests a direct relation with
the classical Hauptmodul already identified in
Section~\ref{sec:target}.

\begin{conjecture}[Quadratic moonshine identity]\label{conj:dual-quadratic}
For the dual components \eqref{eq:dual-splitting},
\begin{equation}\label{eq:dual-quadratic}
 S_0(\tau)^2+S_2(\tau)S_3(\tau)+4
   =\bigl(T_{31A}(\tau)+1\bigr)^2.
\end{equation}
\end{conjecture}

Here $T_{31A}=q^{-1}U(1,31)^3$ is the known classical function,
independently of Conjecture~\ref{conj:main}. Exact integer coefficient
calculations verify \eqref{eq:dual-quadratic} through $q^{519}$,
inclusive. The dual coefficients were computed through $q^{520}$
and compared with an independent Rogers--Ramanujan expansion of
$T_{31A}$. The lost order accounts for the leading term $q^{-1}$
when squaring. This finite check is evidence for the infinite identity.

If Conjecture~\ref{conj:dual-fricke} holds, the quadratic expression
$S_0^2+S_2S_3$ is Fricke-invariant by
Proposition~\ref{prop:dual-matrices}; it is already invariant under
translation. This is compatible with \eqref{eq:dual-quadratic}, but
does not prove that identity. Likewise, neither dual conjecture alone
proves the original positive evaluation: a further identity connecting
the original combination \eqref{eq:signed} to the dual sums would
still be needed. There is no conflict with
Corollary~\ref{cor:fricke}, which concerns individual sums for the
original quadratic datum, rather than this dual combination.

\section{Consequences, scope, and the remaining identities}
The results separate three kinds of obstruction. The positive-support
test excludes some nonintegral quadratic data from any finite positive
integer-power realization. The radial calculation excludes every
individual member of the surviving index-$31$ family. Neither excludes
the explicit positive combination \eqref{eq:positive}.
This distinction explains why the earlier failure of individual searches
does not settle the index-$31$ case.

Theorems about all modular Nahm sums cannot be inferred from the finite
cubic table. Nor does the present work identify all positive combinations
for its surviving rows. At index $13$ the earlier targets are reciprocal
coset products with a two-component Fricke law; at index $31$ the
present target is a theta difference divided by eta factors, with a
scalar Fricke law and cube $T_{31A}$. Their sum evaluations have
different numerators and both remain conjectural.

The concrete remaining task is the mixed-base realization
$q^{-1}\Ssum^3=T_{31A}$, or equivalently the transformation
\eqref{eq:positive-numerator} to \eqref{eq:U31}, equivalently
\eqref{eq:theta-conjecture}. A suitable summation transformation,
finitization, or recurrence certificate would prove it. The comparison
with \eqref{eq:mod5positive} suggests studying identities for entire
families of shifts, but supplies no such certificate at index $31$.
The known theta transformations already establish the modular target;
they do not establish modularity of its proposed sum side.
In particular, no Sturm bound can currently promote
\eqref{eq:finitecheck} to an infinite identity.

The principal unconditional result is the all-rational-shift exclusion
and its Fricke consequence. The positive-support criterion, contiguous
identity, five-term dilogarithm certificate, and amplitude calculation
provide further proved statements.
The specific mixed-base evaluation remains Conjecture~\ref{conj:main}.
For the dual datum, the three-class splitting, translation law, and
icosahedral matrix algebra in Section~\ref{sec:dual-fricke} are also
proved. The proposed Fricke transformation and quadratic identity
with $T_{31A}$ remain Conjectures~\ref{conj:dual-fricke} and
\ref{conj:dual-quadratic}. They offer additional structure for a proof
approach, while leaving the original sum evaluation open.

\section*{Statements and declarations}
\noindent\textbf{Funding.} The author received no external funding for this work.

\smallskip
\noindent\textbf{Competing interests.} The author declares no competing interests.

\smallskip
\noindent\textbf{AI assistance.} ChatGPT (OpenAI) assisted with searches,
symbolic and exact-arithmetic calculations, verification, literature
checks, proof organization, and editing. The author is responsible for
the final manuscript and its mathematical content.

\appendix
\section{An explicit certificate for the leading action}\label{app:dilog}
This appendix proves the value used in \eqref{eq:dilog-value} and
Proposition~\ref{prop:amplitude}. It is the conductor-$19$ certificate
from the companion cubic classification \cite[Appendix~C, identity~8]{Cubic},
reproduced here so that the radial exponent has a self-contained proof.

\begin{proposition}\label{prop:dilog}
For $\rho,\sigma$ in Lemma~\ref{lem:saddle},
$31L(\rho)+L(\sigma)=4\pi^2$.
\end{proposition}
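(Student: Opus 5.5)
The plan is to prove $31L(\rho)+L(\sigma)=4\pi^2$ by exhibiting an explicit five-term relation, as the appendix title indicates. The underlying principle is that $31[\rho]+[\sigma]$ lies in the Bloch group and is torsion. This is expected because the Nahm equations \eqref{eq:nahm} give $\rho\wedge(1-\rho)=\rho\wedge\rho^6\sigma=\rho\wedge\sigma$ and $\sigma\wedge(1-\sigma)=\sigma\wedge\rho^{31}\sigma^6=31\,\sigma\wedge\rho$. Hence $31\,\rho\wedge(1-\rho)+\sigma\wedge(1-\sigma)=0$ in $\wedge^2K^\times\otimes\QQ$.

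A torsion element does not by itself give the exact real value. I would therefore realize the vanishing concretely, as a finite chain of Abel five-term relations
\[
 L(x)+L(y)-L(xy)-L\Bigl(\frac{x(1-y)}{1-xy}\Bigr)-L\Bigl(\frac{y(1-x)}{1-xy}\Bigr)=0
 \qquad (0<x,y<1),
\]
together with the reflection $L(x)+L(1-x)=\pi^2/6$ and the special values $L(0)=0$, $L(1)=\pi^2/6$. The relevant arguments are all real elements of $K$ lying in $(0,1)$: $\rho$, $\sigma$, $1-\rho=\rho^6\sigma$, and products of these units.

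The concrete steps are as follows.
\begin{enumerate}
\item Choose pairs $(x,y)$ built from $\rho$, $\sigma$, and conjugate units in $\mathcal O_K^\times$. The cyclic cubic field has unit rank two, and $\rho$ and $1-\rho$ are units, since $f(0)=1$ and $f(1)=-1$.
\item Arrange the chosen pairs so that the arguments of the five-term relations telescope.
\item Show that the resulting integer combination leaves exactly $31[\rho]+[\sigma]$, plus some copies of $[1]$ or $[0]$ and some reflection pairs.
\end{enumerate}
The final count of constants then gives $48\cdot\pi^2/12=4\pi^2$ from the $\pi^2/6$ terms, and that count must equal $4\pi^2$.

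Each application is verified by two exact checks:
\begin{itemize}
\item an algebraic identity in $K$, namely reducing each derived argument modulo \eqref{eq:reduction};
\item a real inequality placing each argument in $(0,1)$, using the rational bounds for $\rho$ from Lemma~\ref{lem:saddle}.
\end{itemize}
The second check matters because the real five-term identity for $L$ holds without branch corrections only on $0<x,y<1$. Outside that range one must instead use the $\Li_2$ version with $\log$ correction terms.

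The main obstacle is finding the certificate, that is, the specific five (or so) pairs $(x,y)$. I would try first to use the pair $(\rho,\sigma)$ itself and its images under the Galois automorphism $\rho\mapsto\rho'$ of order three, together with the inversion $x\mapsto 1/x$ extended by $L(1/x)$-type relations where needed. The coefficient $31$ suggests an iterated doubling/distribution structure, since $31=2^5-1$. I would therefore also consider the distribution relation $L(x^n)=n\sum_{\zeta^n=1}\ldots$ restricted to real arguments, but only as a fallback.

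Once candidate pairs are found, they are checked by exact reduction and by the interval test. There is a consistency check at the end: the numerical value $\Lambda=4\pi^2/93$ can be compared against $\rho\approx0.7781238$. This confirms that no multiple of $\pi^2/6$ has been lost in the bookkeeping.
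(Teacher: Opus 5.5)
Your outline has the same architecture as the paper's proof. The paper gives an explicit integer combination of Rogers five-term arrays $\mathrm{FT}(x,y)$ and reflection arrays $[x]+[1-x]-[1]$, with every argument in $(0,1)$. It reduces formally in $K$ to $31[\rho]+[\sigma]-24[1]$, and it is checked by reduction modulo $\rho^3=5\rho^2-2\rho-1$ and by the rational bounds on $\rho$.

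The gap is that this combination \emph{is} the proof, and you do not supply it. Your steps 1--3 remain a search problem. The constant is presupposed (``that count must equal $4\pi^2$'') rather than read off from a verified relation. In the paper, $4\pi^2=24\cdot\pi^2/6$, where $24$ is the sum of the thirteen reflection coefficients, i.e.\ minus the net coefficient of $[1]$. Your computation $31\,\rho\wedge(1-\rho)+\sigma\wedge(1-\sigma)=0$ is correct. Since $K$ is totally real, it explains why $31L(\rho)+L(\sigma)$ should be a rational multiple of $\pi^2$. But, as you say yourself, it does not determine which rational. The comparison with $\Lambda=4\pi^2/93$ is only a numerical consistency check. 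Closing the gap along that route would need an explicit denominator bound coming from the torsion of the Bloch group of $K$, plus a rigorous error estimate; you do not attempt this.

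The search heuristics you list also point away from the relation that actually exists.
\begin{itemize}
\item The Galois group of $K$ is generated by $\rho\mapsto1/(1-\rho)$, since $(1-X)^3f\bigl(1/(1-X)\bigr)=-f(X)$. So the conjugates of $\rho$ are $1/(1-\rho)\approx4.507$ and $1-1/\rho\approx-0.285$. These lie outside $(0,1)$ and inside the anharmonic orbit of $\rho$, so for $\rho$ itself they give nothing beyond the dilogarithm's standard six-fold symmetries.
\item Nothing in the certificate uses $31=2^5-1$. The $31$ is simply the exponent in $1-y=x^{31}y^6$.
\item ``Five-term'' refers to Abel's relation, not to five pairs. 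The paper's certificate uses $26$ five-term instances and $13$ reflections.
\item The arguments are not confined to the group generated by $\rho$ and $1-\rho$. They are monomials in $\rho$, $s=1-\rho$, the non-units $e=1+\rho$ and $h=(2-\rho)/(1-\rho)$ (both of norm $7$), $\ell=\rho^2-\rho+1$ (norm $19$), and the rational number $7$.
\end{itemize}
To turn your plan into a proof, you must exhibit such a relation explicitly. One way is a systematic search over five-term relations among $S$-units, with $S$ containing the primes above $7$ and $19$. You would then carry out the exact-reduction and interval checks you describe.
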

\begin{proof}
Write $[x]$ for a formal symbol, including $[1]$, and put
\begin{align*}
 \mathrm{FT}(x,y)&=[x]+[y]-[xy]
 -\left[\frac{x(1-y)}{1-xy}\right]
 -\left[\frac{y(1-x)}{1-xy}\right],\\
 \mathrm R(x)&=[x]+[1-x]-[1].
\end{align*}
The Rogers five-term and reflection identities send each of these
arrays to zero under $[x]\mapsto L(x)$, $[1]\mapsto\pi^2/6$,
for $x,y\in(0,1)$; see \cite{Zagier}. In the following certificate set
\[
 s=1-\rho,\qquad \ell=\rho^2-\rho+1,\qquad
 e=1+\rho,\qquad h=\frac{1+s}{s}.
\]
These symbols are local to this appendix. Reduction by
$\rho^3=5\rho^2-2\rho-1$ gives the formal identity
\begingroup\small
\begin{multline}\label{eq:dilog-certificate}
-5\,\mathrm{FT}\big(s,\;s\big) + 4\,\mathrm{FT}\big(e^{-1},\;s\,e\big)\\
- \mathrm{FT}\big(\ell^{-1}\,e^{-1},\;\ell\big) + \mathrm{FT}\big(\ell,\;s\,\ell^{-1}\big)\\
+ 2\,\mathrm{FT}\big(\rho^{-1}\,e\,h^{-1},\;s\,e^{-1}\,h\big) + 4\,\mathrm{FT}\big(\rho^{-1}\,s,\;\rho^{-1}\,s\big)\\
+ \mathrm{FT}\big(s^{-1}\,\ell^{-1}\,h^{-1},\;\ell\big) + 2\,\mathrm{FT}\big(\rho^{-2}\,e\,7^{-1},\;\rho^{2}\big)\\
+ \mathrm{FT}\big(\rho^{-1}\,\ell\,e^{-1},\;s\,\ell^{-1}\,e\big) - 2\,\mathrm{FT}\big(h\,7^{-1},\;s^{2}\big)\\
+ \mathrm{FT}\big(\rho^{-1}\,s^{-1}\,\ell\,h^{-1},\;\rho^{-1}\,s\big) - \mathrm{FT}\big(\rho^{-1}\,e\,h^{-1},\;\rho^{-1}\,\ell\,e^{-1}\big)\\
- \mathrm{FT}\big(h^{-1},\;s^{2}\,\ell^{-1}\,h\big) + 4\,\mathrm{FT}\big(\rho^{-2}\,s\,e,\;h\,7^{-1}\big)\\
+ 4\,\mathrm{FT}\big(\rho^{-1}\,s^{2}\,h,\;e\,7^{-1}\big) - 2\,\mathrm{FT}\big(e^{-1}\,h^{-1},\;s\big)\\
+ \mathrm{FT}\big(\rho^{-1}\,s,\;\rho\,\ell^{-1}\big) + \mathrm{FT}\big(\ell^{-1}\,e^{-1},\;s\,\ell\,h^{-1}\big)\\
- 2\,\mathrm{FT}\big(e^{-1},\;h^{-1}\big) + 2\,\mathrm{FT}\big(\rho^{-2}\,s\,e\,h\,7^{-1},\;s^{-1}\,h^{-1}\big)\\
+ \mathrm{FT}\big(\rho^{-2}\,s^{2},\;\rho^{-1}\,e\,h^{-1}\big) - 2\,\mathrm{FT}\big(\rho^{-1}\,s\,e^{-1}\,h^{-1}\,7,\;h\,7^{-1}\big)\\
+ 2\,\mathrm{FT}\big(\rho^{-1}\,s^{2}\,e\,h\,7^{-1},\;\rho\,e^{-1}\big) + \mathrm{FT}\big(\rho^{-3}\,\ell\,e^{-1},\;\rho^{3}\big)\\
- \mathrm{FT}\big(s\,\ell^{-1}\,e,\;s\,e^{-1}\,h^{-1}\big) + \mathrm{FT}\big(\rho^{-6}\,s,\;\rho^{3}\,s^{-1}\,\ell\,e^{-1}\big)\\
+ \mathrm{R}\big(\rho^{-3}\,s^{2}\,e\,h^{-1}\big) + \mathrm{R}\big(\rho^{-3}\,s^{3}\big)\\
- 2\,\mathrm{R}\big(\rho^{-2}\,e^{2}\,7^{-1}\big) - 4\,\mathrm{R}\big(\rho^{-2}\,s\,e\big)\\
+ \mathrm{R}\big(\rho^{-2}\,s^{2}\big) - \mathrm{R}\big(\rho^{-1}\,s^{-1}\,\ell\,h^{-1}\big)\\
- \mathrm{R}\big(s^{-1}\,\ell^{-1}\,h^{-1}\big) - \mathrm{R}\big(s^{-1}\,h^{-1}\big)\\
+ 3\,\mathrm{R}\big(e^{-1}\big) + 30\,\mathrm{R}\big(s\big)\\
- 3\,\mathrm{R}\big(s\,e\big) + \mathrm{R}\big(s^{2}\,\ell^{-1}\,h\big)\\
- \mathrm{R}\big(\rho^{2}\,s^{2}\,e^{-2}\,h^{-1}\,7\big)\\
=31[\rho]+[\sigma]-24[1].
\end{multline}
\endgroup
For completeness, the identity is checked by replacing each
$\mathrm{FT}$ and $\mathrm R$ by its displayed array, reducing each
argument in the basis $1,\rho,\rho^2$, and collecting equal symbols.
The only nonzero coefficients are $31$ at $[\rho]$, $1$ at
$[\rho^{-6}s]=[\sigma]$, and $-24$ at $[1]$; the last equals
minus the sum of the thirteen reflection coefficients.
All inputs lie in $(0,1)$: substitute the rational bounds
$7781238/10^7<\rho<7781239/10^7$ from Lemma~\ref{lem:saddle}
into the positive monomials in $\rho,s,\ell,e,h,7$.
For $\ell$ one may use
$r_-^2-r_++1<\ell<r_+^2-r_-+1$, where $r_-,r_+$ are these endpoints.
The resulting rational interval bounds stay strictly between $0$ and
$1$ for every input; the derived five-term arguments then do so too.
Applying $L$ to \eqref{eq:dilog-certificate} proves the proposition.
\end{proof}

\end{document}